\newtheorem{thm}{Theorem}[section]
\newtheorem{cor}[thm]{Corollary}
\newtheorem{lem}[thm]{Lemma}
\newtheorem{prop}[thm]{Proposition}
\theoremstyle{definition}
\newtheorem{defn}[thm]{Definition}
\theoremstyle{remark}
\numberwithin{equation}{section}
\begin{document}
	\title[]
	{Perelman-type  no breather theorem for noncompact Ricci flows}
	
	\author{Liang Cheng,  Yongjia Zhang}

	\dedicatory{}
	\date{}
	
	\subjclass[2000]{
		Primary 53C44; Secondary 53C42, 57M50.}

	\keywords{shrinking breathers, gradient Ricci solitons, Perelman's no breather theorem, noncompact Ricci flows}
	
	\thanks{Liang Cheng's Research partially supported by China Scholarship Council, self-determined research funds of CCNU from the colleges' basic research and operation of MOE CCNU19QN075 and Natural Science Foundation of Hubei 2019CFB511
	}
	
	\address{School of Mathematics and Statistics $\&$ Hubei Key Laboratory of Mathematical Sciences, Central  China Normal University, Wuhan, 430079, P.R.China}

	\email{chengliang@mail.ccnu.edu.cn }

\address{School of Mathematics, University of Minnesota, Twin Cities, Minneapolis, MN, 55455, USA}
	
\email{zhan7298@umn.edu}

		\begin{abstract}
In this paper, we first show that a complete shrinking breather with Ricci curvature bounded from below must be a shrinking gradient Ricci soliton. This result has several applications. First, we can classify all complete $3$-dimensional shrinking breathers. Second, we can show that every complete shrinking Ricci soliton with Ricci curvature bounded from below must be gradient---a generalization of Naber's result in \cite{N}. Furthermore, we develop a general condition for the existence of the asymptotic shrinking gradient Ricci soliton, which hopefully will contribute to the study of ancient solutions.
	\end{abstract}

		\maketitle
	
	\section{Introduction}
	
The first application of Perelman's powerful entropy formulas, immediately after they are developed, was the proofs of  no breather theorems \cite{P1}. Perelman proved that every compact shrinking, steady, or expanding breather must be a shrinking, steady, or expanding gradient Ricci soliton, respectively. Let us first of all recall the definitions of breathers and (gradient) Ricci solitons.
	\begin{defn}
A complete solution to the Ricci flow $(M,g(t))_{t\in[0,T]}$ is called a breather, if there exist $0\leq t_1<t_2\leq T$, $\alpha>0$, and a self-diffeomorphism $\phi:M\rightarrow M$, such that
\begin{eqnarray*}
\alpha g(t_1)=\phi^*g(t_2).
\end{eqnarray*}
If $\alpha<0$, $\alpha=1$, or $\alpha>1$, then the breather is called shrinking, steady, or expanding, respectively.
	\end{defn}
	\begin{defn}\label{soliton}
		A Ricci soliton is a tuple $(M, g, X)$, where $(M, g)$ is a smooth Riemannian
		manifold and $X$ is a smooth vector field on $M$ , satisfying
		$$ Ric +\frac{1}{2}\mathcal{L}_Xg =\frac{\lambda}{2}g,$$
where $\lambda$ is a real number. If $\lambda>0$, $\lambda=0$, or $\lambda<0$, then the Ricci soliton is called shrinking, steady, or expanding, respectively. Furthermore, after scaling, the constant $\lambda$ can always be normalized to $1$, $0$, and $-1$ in the shrinking, steady, and expanding cases, respectively (which we shall always do throughout this paper). Finally, if the vector field $X$ is integrable, that is, if there exists a smooth function $f$ such that $X=\nabla f$, then the Ricci soliton is called gradient, and the function $f$ is called the potential function.
	\end{defn}

Ricci solitons generate self-similar solutions to the Ricci flow, called the canonical forms. Let the positive function $\tau(t)$, the one-parameter family of self-diffeomorphisms $\phi_t:M\rightarrow M$, and the evolving metric $g(t)$ be defined as
\begin{eqnarray*}
\tau(t)&=&1+\lambda t,
\\
\frac{\partial}{\partial t}\phi_t(x)&=&\frac{1}{\tau(t)}X(\phi_t(x)),
\\
g(t)&=&\tau(t)\phi_t^*g,
\end{eqnarray*}
then $g(t)$ moves by the Ricci flow equation.

\emph{Remark:} In the construction of the canonical form, the completeness of the vector field $X$ is necessary. By the completeness of a vector field we mean that every one of its flow curves exists for all time. Zhu-Hong Zhang \cite{ZZh} proved the completeness of the vector $\nabla f$ for a gradient Ricci soliton $(M,g,f)$. In the general non-gradient case, the completeness of $X$ is not yet known to the best of our knowledge, though a Ricci soliton without a canonical form is inconceivable. This is an assumption we make in our paper.

If we regard the Ricci flows as orbits in the space
\begin{eqnarray*}
\text{Met}(M)\Big/\text{Diff},
\end{eqnarray*}
where $\text{Met}(M)$ stands for the space of all complete smooth Riemannian metrics on $M$, and $\text{Diff}$ stands for the group of all self-diffeomorphisms and scalings, then the breathers are periodic orbits and the Ricci solitons are static orbits. Therefore, the no breather theorem is tantamount to saying that the periodic orbits must also be static.

Perelman's proofs of no breather theorems, being applications of the $\mathcal{F}$ functional and the $\mathcal{W}$ functional, elegant as they are, rely on the existence of minimizers of these functionals, which is not always true in the general noncompact case, as indicated in \cite{Zhang2} by a counterexample. Qi.S Zhang \cite{Zhang1} also showed that the minimizer of $\mathcal{W}$ exists in the noncompact case assuming certain conditions at infinity, and thereby proved a Perelman-type no shrinking breather theorem for asymptotically flat manifolds with positive scalar curvature.

Lu and Zheng \cite{LZ} came up with the idea of constructing an ancient solution with a given shrinking breather, and presented a new proof of Perelman's no shrinking breather theorem in the compact case. Their method was also capable of dealing with noncompact case under certain additional assumptions. In \cite{Zh}, the second author applied the method of Lu and Zheng in combination with a reduced distance estimate, and extended Perelman's no shrinking breather theorem to the noncompact case assuming only bounded sectional curvature.
	
In this present work, we further extend this Perelman-type no shrinking breather theorem to almost the fullest generality, that is, we assume only a lower bound for the Ricci curvature.

	\begin{thm}\label{main}
		A complete shrinking breather with  Ricci curvature bounded from below must be
		a shrinking gradient Ricci soliton.
	\end{thm}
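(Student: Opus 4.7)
The plan is to follow the Lu--Zheng strategy, refined in \cite{Zh}, of first manufacturing a complete ancient Ricci flow out of the breather and then extracting an asymptotic shrinking gradient Ricci soliton via Perelman's reduced volume. The three stages are: (i) build the ancient solution from the breather; (ii) obtain an asymptotic shrinking gradient Ricci soliton from reduced volume monotonicity; (iii) use the scaling self-similarity of the ancient flow to identify the asymptotic soliton with the original metric, thereby forcing $(M,g(t_1))$ itself to be a gradient soliton. The novelty must lie in carrying out step (ii) under the hypothesis of a lower Ricci bound alone, rather than the bounded sectional curvature hypothesis of previous work.

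For (i), given the breather identity $\alpha g(t_1) = \phi^* g(t_2)$ with $0<\alpha<1$, I would iteratively pull back and rescale $g$ on $[t_1,t_2]$ to earlier time intervals: on the $k$-th ``period,'' set $\tilde g(t) := \alpha^{-k}(\phi^k)^*g(s_k(t))$ for an appropriate time reparametrization $s_k$, with the breather identity guaranteeing consistent gluing. This produces a complete ancient Ricci flow $(M,\tilde g(t))_{t\in(-\infty,t_2]}$. Because Ricci curvature is scaling-invariant, the lower Ricci bound on $g(t_1)$ descends to a uniform lower Ricci bound on $\tilde g(t)$ over every fixed compact time subinterval. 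For (ii), I would form Perelman's reduced volume $\tilde V(\tau)$ based at a spacetime point of $\tilde g$; its monotonicity in $\tau$ combined with the universal upper bound $(4\pi)^{n/2}$ forces a limit as $\tau\to\infty$. Rescaling along the natural sequence $\tau_k=\alpha^{-k}$ and taking a pointed subsequential limit should produce a shrinking gradient Ricci soliton as the asymptotic model. The scaling periodicity of $\tilde g$ then identifies this limit with $g(t_1)$ up to diffeomorphism and scaling, which gives (iii).

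The hard part is step (ii) under the weakened curvature assumption. The classical route relies on Hamilton's compactness, which requires uniform bounds on the full curvature tensor, while here only $\operatorname{Ric}\geq -C$ is available. I expect the technical core of the paper to supply: (a) upper and lower bounds on Perelman's reduced distance $\ell$ derived from Laplace and heat-kernel comparison under a Ricci lower bound; (b) a non-collapsing or volume-comparison estimate along the rescalings, guaranteeing that the pointed limit is nondegenerate; and (c) a smoothing/regularity argument yielding subsequential smooth convergence at regular points of a Cheeger--Gromov limit, so that the limit inherits a genuine smooth gradient soliton structure. Packaging these ingredients into the clean ``general condition for the existence of the asymptotic shrinking gradient Ricci soliton'' advertised in the abstract, and then verifying that condition for the ancient solution constructed in step (i), should constitute the substance of the proof.
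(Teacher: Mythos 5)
Your overall architecture---construct an ancient flow by splicing rescaled pullbacks of the breather, extract an asymptotic shrinking gradient Ricci soliton from reduced volume monotonicity, then use self-similarity to identify the limit with the original metric---is exactly the paper's strategy (Sections 3--6), and your steps (i) and (iii) match the paper's. The gap is in step (ii), which you correctly flag as the hard part but then resolve by guessing at machinery that is not what the paper uses and that would not obviously work. You propose to get reduced-distance bounds from ``Laplace and heat-kernel comparison under a Ricci lower bound'' and to handle compactness via ``smooth convergence at regular points of a Cheeger--Gromov limit,'' i.e.\ a possibly singular limit space. The key observation you are missing is that no such comparison geometry or singular-limit theory is needed: by construction, the rescaled flows $\tau_i^{-1}g(\tau_i\tau)$ restricted to a spacetime neighborhood of the base points $(x_i,1)$ differ from the \emph{original breather} near $(p_0,\cdot)$ only by a diffeomorphism and a scaling factor converging to $1$. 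Hence locally uniform bounds on the \emph{full} curvature tensor and on the injectivity radius at $x_i$ come for free from the breather itself, and classical smooth Cheeger--Gromov--Hamilton compactness applies directly; there is no singular set to worry about. This is precisely the content of the paper's ``locally uniformly Type I'' condition (Definition 4.1 and Proposition 4.2).

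The actual role of the Ricci lower bound is different from what you assign it. First, it is what makes Perelman's $\mathcal{L}$-geometry usable at all on a complete noncompact flow without bounded curvature: by Ye's results, it guarantees existence of minimal $\mathcal{L}$-geodesics, local Lipschitzness of $L$, and the monotonicity $\mathcal{V}(\tau)\leq 1$. Second, it supplies time-independent cutoff functions with uniformly small gradient, which are needed to localize the distributional inequalities for $l_i$, integrate by parts against $(4\pi\tau)^{-n/2}e^{-l_i}$, and pass to the limit so that $(4\pi\tau)^{-n/2}e^{-l_\infty}$ solves the conjugate heat equation and $l_\infty$ satisfies the soliton identity. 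You also omit the two genuinely new estimates the argument requires: a uniform bound on $l(x_i,\tau_i)$ obtained by concatenating rescaled copies of a fixed test curve (Section 3), and local $C^0$ and gradient bounds on $l_i$ near $(x_i,1)$ obtained by showing that a minimal $\mathcal{L}$-geodesic reaching a point near $x_i$ cannot have large velocity there, using only the local curvature bound and $\nabla l=\gamma'$ (Proposition 5.1). Without these, the weak-$*$ $W^{1,2}_{\mathrm{loc}}$ convergence of $l_i$ and the subsequent identification of the limit as a gradient soliton do not go through.
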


Let us then discuss some applications of Theorem \ref{main}. First of all, we recall the well-known result that 3-dimensional shrinking gradient Ricci solitons are classified as: $\mathbb{R}^3$, a quotient of $\mathbb{S}^3$, and a quotient of $\mathbb{S}^2\times \mathbb{R}$; see \cite{P1}, \cite{NW} and \cite{CCZ}. On the other hand, since one can always construct an ancient solution with a shrinking breather, it follows from Chen \cite{C} that a $3$-dimensional shrinking breather always has nonnegative sectional curvature;
see Lemma \ref{smooth_ancient_solution} and Corollary \ref{positive_curvature}.
 Taking all these facts into account, we obtain the following classification of 3-dimensional complete shrinking breathers.
	\begin{cor}\label{main2}
		Any $3$-dimensional complete shrinking breather must be
		a gradient shrinking  Ricci soliton, and hence is isometric to $\mathbb{R}^3$, a quotient of $\mathbb{S}^3$, or a quotient of $\mathbb{S}^2\times \mathbb{R}$.
	\end{cor}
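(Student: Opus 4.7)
The plan is to assemble the corollary from three ingredients already mentioned in the introduction: (a) the Lu--Zheng-type construction that upgrades a shrinking breather to a smooth ancient solution (stated in the paper as Lemma~\ref{smooth_ancient_solution}), (b) Chen's theorem on nonnegative sectional curvature of 3-dimensional ancient solutions, and (c) the main Theorem~\ref{main}, followed by the Perelman/Ni--Wallach/Cao--Chen--Zhu classification of 3-dimensional shrinking gradient Ricci solitons.

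First, I would take the given 3-dimensional shrinking breather $(M,g(t))_{t \in [0,T]}$ with $\alpha g(t_1) = \phi^* g(t_2)$ and iterate the breather relation (using the diffeomorphism $\phi$ and the scaling factor $\alpha<1$) to produce a Ricci flow solution defined on $(-\infty, T]$; this is exactly Lemma~\ref{smooth_ancient_solution} in the paper. The resulting ancient solution is complete, and it agrees with the breather (up to diffeomorphism and scaling) on each of its time slices, so any curvature property passed from the ancient solution back to the breather.

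Next, I would invoke Chen's theorem \cite{C}, which says every complete 3-dimensional ancient Ricci flow has nonnegative sectional curvature. Applied to our ancient extension, this forces the original breather to have nonnegative sectional curvature, and in particular $\mathrm{Ric}\geq 0$ on $(M,g(t))$ for all $t\in[t_1,t_2]$ (this is the content of Corollary~\ref{positive_curvature}). In particular the hypothesis of Theorem~\ref{main} is satisfied, and we conclude that $(M,g(t))$ is a shrinking gradient Ricci soliton.

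Finally, I would cite the classification of complete 3-dimensional shrinking gradient Ricci solitons \cite{P1, NW, CCZ}: the only such solitons are $\mathbb{R}^3$, quotients of $\mathbb{S}^3$, and quotients of $\mathbb{S}^2\times\mathbb{R}$. Since the breather is itself such a soliton, the corollary follows. The only step that requires real work is the extension-to-ancient-solution construction, but that is already handled by Lemma~\ref{smooth_ancient_solution}; everything else is a direct application of cited results, so the proof is essentially a short assembly argument rather than a new calculation.
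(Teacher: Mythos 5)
Your proposal is correct and follows exactly the route the paper sketches in its introduction: build the ancient extension via Lemma \ref{smooth_ancient_solution}, apply Chen's theorem to conclude nonnegative sectional curvature of the breather (Corollary \ref{positive_curvature}), which in particular gives the Ricci lower bound needed to invoke Theorem \ref{main}, and then cite the classification of complete $3$-dimensional shrinking gradient Ricci solitons. There is no gap and no meaningful deviation from the paper's argument.
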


Furthermore, by using a result of Munteanu and Wang, we can also derive the following corollary on shrinking breathers with nonnegative sectional curvature; see Corollary 3 in \cite{MW}.

\begin{cor}
A complete shrinking breather with nonnegative sectional curvature can be written as a product $\mathbb{R}^{n-k}\times N^{k}$, where $N$ is a compact shrinking gradient Ricci soliton with nonnegative sectional curvature and positive Ricci curvature.
\end{cor}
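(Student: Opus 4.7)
The plan is to obtain the conclusion by concatenating Theorem \ref{main} with the splitting result of Munteanu--Wang, so the argument reduces to verifying that the hypotheses of each result are satisfied in turn.

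First, I would observe that nonnegative sectional curvature implies in particular a uniform lower bound on the Ricci curvature (namely $\mathrm{Ric}\geq 0$). Thus the hypothesis of Theorem \ref{main} is fulfilled, and we may conclude that the shrinking breather in question is actually a shrinking gradient Ricci soliton $(M,g,f)$ in the sense of Definition \ref{soliton}, with $\lambda$ normalized to $1$. Nonnegative sectional curvature of course persists for this soliton, since passing to the gradient Ricci soliton structure does not alter the underlying metric $g$ at the time $t_1$.

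Second, I would invoke Corollary 3 of \cite{MW}, which asserts that a complete shrinking gradient Ricci soliton with nonnegative sectional curvature splits isometrically as $\mathbb{R}^{n-k}\times N^{k}$, where $N^k$ is a compact shrinking gradient Ricci soliton with nonnegative sectional curvature and positive Ricci curvature. Applying this result to $(M,g,f)$ from the previous step yields precisely the desired decomposition; the splitting of the potential function $f$ along the Euclidean factor, as well as the fact that the compact factor inherits a soliton structure with strictly positive Ricci curvature, are already built into the cited corollary.

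The only conceptual step that is not purely formal is the first one, i.e.\ the reduction from the breather hypothesis to the gradient soliton hypothesis; but this reduction is supplied verbatim by Theorem \ref{main}. The remainder of the argument is a direct quotation of \cite{MW}. Consequently, I do not anticipate any serious obstacle: the main theorem of this paper was designed precisely so that classical results about shrinking gradient Ricci solitons (such as the Munteanu--Wang splitting, the three-dimensional classification, and Naber's rigidity) can be imported wholesale into the breather setting.
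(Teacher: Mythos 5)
Your proposal is correct and matches the paper's intended argument exactly: nonnegative sectional curvature gives the Ricci lower bound needed for Theorem \ref{main}, which upgrades the breather to a shrinking gradient Ricci soliton, and then Corollary 3 of \cite{MW} supplies the splitting $\mathbb{R}^{n-k}\times N^{k}$ verbatim. The paper states this corollary without further proof precisely because the argument is this two-step concatenation.
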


Recall that the canonical form of a (not necessarily gradient) Ricci soliton is self-similar, and hence a breather as well. Combining this observation with Theorem \ref{main}, we obtain the following corollary.

\begin{cor}
A complete shrinking Ricci soliton with Ricci curvature bounded from below must be gradient; here we also assume that the vector field $X$ in Definition \ref{soliton} is complete.
\end{cor}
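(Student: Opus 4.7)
The plan is to reduce the claim directly to Theorem \ref{main} by passing to the canonical form of the soliton. Given the shrinking Ricci soliton $(M,g,X)$ with $X$ complete, I would form the self-similar solution $g(t) = \tau(t)\phi_t^{*}g$ described in the introduction; completeness of $X$ guarantees that $\phi_t$ is a one-parameter family of diffeomorphisms and hence $g(t)$ is a genuine complete Ricci flow on the natural time interval.

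The first substantive step is to check that this canonical form is a shrinking breather in the sense of Definition 1.1. For any two admissible times $t_1\neq t_2$, the definition of $g(t)$ yields at once
\begin{equation*}
\frac{\tau(t_2)}{\tau(t_1)}\,g(t_1) \;=\; \bigl(\phi_{t_2}^{-1}\circ\phi_{t_1}\bigr)^{*}g(t_2),
\end{equation*}
so $\phi_{t_2}^{-1}\circ\phi_{t_1}$ serves as the self-diffeomorphism and $\tau(t_2)/\tau(t_1)$ as the scaling factor. Moreover, because Ricci curvature is both diffeomorphism- and scaling-invariant as a $(0,2)$-tensor, one has $Ric_{g(t)}(x) = Ric_{g}(\phi_t(x))$, so the assumed uniform lower bound on $Ric_g$ transfers to $g(t)$ with the same constant on the whole flow.

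Theorem \ref{main} then applies and produces a smooth potential $f$ on $M$ satisfying, at $t=0$, the gradient shrinking soliton equation $Ric_g + \nabla^2 f = \tfrac{1}{2}g$. Subtracting this from the original equation $Ric_g + \tfrac{1}{2}\mathcal{L}_X g = \tfrac{1}{2}g$ leaves $\mathcal{L}_{X-\nabla f}g = 0$, showing that $X$ differs from the gradient $\nabla f$ only by a Killing vector field. In particular, replacing $X$ by $\nabla f$ produces a genuine gradient shrinking Ricci soliton structure on $(M,g)$, which is the assertion of the corollary.

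The argument is essentially mechanical once Theorem \ref{main} is granted; the only real obstacle is Theorem \ref{main} itself. A minor bookkeeping point I would verify carefully is that the ratio $\tau(t_2)/\tau(t_1)$ indeed falls into the shrinking range of Definition 1.1 when $\lambda>0$ for the appropriate ordering of $t_1$ and $t_2$, matching the paper's sign convention for $\alpha$.
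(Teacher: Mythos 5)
Your argument is correct and is exactly the paper's (essentially one-line) proof: the canonical form of the soliton is self-similar, hence a shrinking breather with Ricci curvature bounded below at each time slice, so Theorem \ref{main} applies. One tiny bookkeeping correction: the lower bound does not transfer "with the same constant" --- since $Ric_{g(t)}=\phi_t^{*}Ric_g$ while $g(t)=\tau(t)\phi_t^{*}g$, one gets $Ric_{g(t)}\geq -\tfrac{K}{\tau(t)}g(t)$, which is still a uniform bound on the compact time interval used for the breather and is all that Theorem \ref{main} requires.
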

This is a generalization of Naber \cite{N}, where he proved that a shrinking Ricci soliton with bounded sectional curvature must also be gradient.

Besides the applications presented above, we hereby remark that the method we developed might be useful in the study of general ancient solutions. Perelman \cite{P1} first discovered the asymptotic shrinking gradient Ricci soliton for $\kappa$-noncollapsed ancient solutions with bounded and nonnegative curvature operator. Naber \cite{N} proved the same result with curvature operator nonnegativity replaced by the Type I curvature bound. Recall that an ancient solution to the Ricci flow $(M,g(\tau))_{\tau\in[0,\infty)}$, where $\tau$ is the backward time, is called Type I, if its curvature satisfies the following bound
\begin{eqnarray*}
|Rm_{g(\tau)}|\leq\frac{C}{\tau},
\end{eqnarray*}
for all $\tau\in(0,\infty)$, where $C$ is a constant independent of $\tau$. At this point, we do not know many cases in which the asymptotic shrinking gradient Ricci soliton exists. In the smooth category, there are two major cases: the Type I case and the case of ``curvature positivity''. The ``curvature positivity'' condition can be as weak as PIC-2, in which case the existence of asymptotic shrinking gradient Ricci soliton is guaranteed by an extension of Hamilton's trace Harnack proved by Brendle \cite{B}; this, of course, covers Perelman's original bounded and nonnegative curvature operator case. Indeed, in both of the two cases mentioned above, Hamilton's trace Harnack is the crucial element---one may either estimate it using the Type I curvature bound, or directly use its positivity in the case of ``curvature positivity''. As to the non-smooth category, Bamler's recent ground-breaking work shows that every singularity model, being a metric flow, always has an asymptotic metric soliton (which he calls tangent flow at infinity); see \cite{RB1}---\cite{RB3}.

In contradistinction from the former conditions on the global geometry, we consider the condition of the existence of asymptotic shrinking gradient Ricci soliton from another perspective---what is the requirement on the local geometry to effectuate the existence as well as the soliton structure of the blow-down limit? As natural as it appears, we have defined a condition called locally uniformly Type I; see Definition \ref{def_typei}. This condition means that the ancient solution, along a sequence of space-time points with time going to negative infinity, has locally uniform curvature bound,  injectivity radii lower bound, as well as reduced distance bound, and all these bounds are in a ``Type I fashion'', plus a technical assumption---time-wise Ricci curvature lower bound. This condition is sufficient to render certain the existence of an asymptotic shrinking gradient Ricci soliton, and may arguably be a necessary condition also. Indeed, in all the known smooth cases listed above---Type I and ``curvature positivity''---the locally uniformly Type I condition is satisfied.

Other than the existence of the asymptotic shrinking gradient Ricci soliton, we also discover that a locally uniformly Type I ancient solution is $\kappa$-noncollapsed on all scales, where $\kappa$ depends on the Gaussian density of the asymptotic shrinking gradient Ricci soliton. This observation, largely owing to the work of Yokota \cite{YT}, is surprising, since we only assume the noncollapsedness along a sequence of space-time points on a specific sequence of scales, yet we conclude that such an ancient solution is noncollapsed everywhere on all scales. This idea was first used by Perelman \cite{P1} to prove that there exists a universal $\kappa$ for 3-dimensional $\kappa$-solutions.

The key estimates which we obtain are local uniform estimates for the reduced distance as well as its gradient.  These estimates do not follow from a direct generalization of the methods in \cite{P1} and \cite{N}. In particular, we do not have the following inequality
\begin{eqnarray*}
|\nabla l|^2+R\leq\frac{Cl}{\tau},
 \end{eqnarray*}
 which is a consequence of analyzing Hamilton's trace Harnack. The idea of our estimates, is to use local curvature bound to control the reduced distance and its gradient. These estimates, though very coarse, yet are sufficient, in combination with the monotonicity of the reduced volume, to construct the soliton structure on the limit space of the blow-down sequence. Furthermore, unlike the aforementioned cases, we are not able to prove that the reduced volume converges to the Gaussian density of the asymptotic shrinking gradient Ricci soliton; the latter is less than or equal to the limit of the former by Fatou's lemma. This results from the lack of the estimate
\begin{eqnarray*}
l(x,\tau)\sim \frac{1}{\tau} d_{g(\tau)}^2(x,p_{\tau}),
\end{eqnarray*}
where $p_\tau$ is a moving point such that $l(p_\tau,\tau)$ is bounded independent of $\tau$. This estimate is valid in all two cases mentioned above. This no loss of reduced volume property is not necessary for our application, yet we doubt whether it actually could happen.

To conclude the introduction section, we remark that in an upcoming work of the second author with Bamler, Chow, Deng, and Ma \cite{BCDMZ}, it is proved that the canonical form of a noncollapsed steady gradient Ricci soliton with nonnegative Ricci curvature always admits an asymptotic shrinking gradient Ricci soliton.

The present paper is organized as follows. In section 2, we review some well known results about Perelman's $\mathcal{L}$-geometry. In section 3, we construct an ancient solution starting with a given shrinking breather, and subsequently estimate the reduced distance along a sequence of space-time points. In section 4, we define the notion of locally uniformly Type I ancient solution. In section 5, we derive the local uniform estimates for the reduced distance as well as its gradient. In section 6, we prove the existence of asymptotic shrinking gradient Ricci soliton for locally uniformly Type I ancient solutions. In section 7, we discuss the possible loss of reduced volume and the noncollapsedness of locally uniformly Type I ancient solutions.

\emph{Acknowledgement}: The authors would like to thank Professor Japing Wang for bringing this problem to us and many useful discussions. The major work of this paper was accomplished while the first author was visiting University of Minnesota, Twin Cities.

	\section{Preliminaries on Perelman's $\mathcal{L}$-geometry}
	
In this section, we collect some basic properties of Perelman's reduced distance and reduced volume. These properties are well-known, and the reader may easily look them up in the literature.
	Let $(M^n,g(\tau))_{\tau\in[0,T]}$ be a solution to the backward Ricci flow
	\begin{align}\label{Ricci_flow}
	\frac{\partial g}{\partial \tau}=2Ric_{g(\tau)},
	\end{align}
where $\tau$ stands for the backward time. Throughout the whole paper, unless otherwise indicated, we always assume that the (backward) Ricci flows in question admits a lower bound for the Ricci curvature at each time slice.
	
	Perelman's	$\mathcal{L}$-energy for a piecewise $C^1$ curve $\gamma(s):[0,\tau]\to M$, where $\tau\in(0, T]$, is defined as
	\begin{align}\label{l-energy} \mathcal{L}(\gamma(s))=\int^{\tau}_0
	\sqrt{s}\big(R_{g(s)}(\gamma(s))+|\gamma'(s)|_{g(s)}^2\big)ds.
	\end{align}	
	One may view $\gamma$ as a curve in the Ricci flow space-time connecting $(\gamma(0),0)$ and $(\gamma(\tau),\tau)\in M\times[0,T]$, satisfying $(\gamma(s),s)\in M\times\{s\}$ for all $s\in[0,\tau]$.

Let $p\in M$ be a fixed base point. For any $(q,\tau)\in M\times(0,T]$, we define
	$$L(q,\tau)=\inf\limits_{\gamma} \mathcal{L}(\gamma(s)),$$
where the infimum is taken over all  piecewise $C^1$
	curves $\gamma(s):[0,\tau]\to M$ satisfying $\gamma(0)=p$ and
	$\gamma(\tau)=q$. The minimizer is called $\mathcal{L}$-geodesic. By the first variation, the $\mathcal{L}$-geodesic equation is
\begin{eqnarray}\label{l-geodesic}
\nabla_{\gamma'}\gamma'(\tau)-\frac{1}{2}\nabla R(\gamma(\tau),\tau)+\frac{1}{2\tau}\gamma'(\tau)+2Ric(\gamma'(\tau))=0.
\end{eqnarray}
Perelman's
	$l$-function, also known as the reduced distance function, is defined as
	\begin{align}\label{reduced_l} l(q,\tau)=\frac{L(q,\tau)}{2\sqrt{\tau}},
	\end{align}
	and the reduced volume is definded as
	\begin{align}\label{WFRV}
	\mathcal {V}(\tau)=\int_{M} (4\pi \tau)^{-\frac{n}{2}}
	e^{-l(\cdot,\tau)}
	dg(\tau).
	\end{align}
We remark here that  both (\ref{reduced_l}) and (\ref{WFRV}) depend on the choice of $p$, and the point $(p,0)\in M\times[0,T]$ is called the \emph{base point} of the functions $l$ and $\mathcal{V}$. When the base point is understood, we often omit it in the notation. Otherwise the base point is included in the notation as the subindex:
\begin{eqnarray*}
l_{(p,0)} \ \ ,\ \mathcal{V}_{(p,0)}.
\end{eqnarray*}
	
For $v\in T_pM,$ let $\gamma_v$ denote the $\mathcal{L}$-geodesic, that is, a solution to (\ref{l-geodesic}), satisfying $\lim\limits_{s\to 0}\sqrt{s}\gamma'(s) = v$. If $\gamma_v$ exists on $[0,\tau]$, then the $\mathcal{L}$-exponential map $\mathcal{L}\text{exp}^{\tau}_p:T_pM\to M$ at time $\tau$ is defined as
	$$\mathcal{L}\text{exp}^{\tau}_p(v)=\gamma_v(\tau).$$
Let $U(\tau)\subset T_pM\cong\mathbb{R}^n$ denote the maximal domain of $\mathcal{L}\text{exp}^{\tau}_p$.  By applying basic ODE theory to the $\mathcal{L}$-geodesic equation---a linear ODE depending only on the geometry quantities near the $\mathcal{L}$-geodesic---one may obtain that $U(\tau)$ is an open set and that $\mathcal{L}\text{exp}^{\tau}_p$ is a smooth map from $U(\tau)$ to $M$. The injectivity domain at time $\tau$ is defined as
\vspace{0.2in}	

	$\Omega^{T_pM}(\tau)=\Big\{ v\in U(\tau)\ \Big\vert \ \gamma_v|_{[0,\tau]} :[0,\tau] \to M \text{ is the unique minimal } \mathcal{L}-\text{geodesic}
	\text{\ from $p$ to $\gamma_v(\tau)$ }; \text{$\gamma_v(\tau) $ is not conjugate to }p \text{ along\ } \gamma_v.\Big\}.
	$
\vspace{0.2in}

Correspondingly we also define
\vspace{0.2in}
	
	$\Omega(\tau)=\Big\{q\in M\ \Big\vert\  \text{There is a unique minimal } \mathcal{L}-\text{geodesic }\gamma:[0,\tau] \to M
	\text{\ with\ } \gamma(0) = p, \gamma(\tau) = q; q\text{ is not conjugate to } p\text{ along\ } \gamma.\Big\}.
	$
\vspace{0.2in}

It is well known that

$$
\Omega(\tau)=\mathcal{L}\text{exp}^{\tau}_p\big(\Omega^{T_pM}(\tau)\big).
$$
	
The cut-locus
	is defined as
	$$
	C(\tau)=M\backslash \Omega(\tau).
	$$

	In Perelman's study \cite{P1} of the reduced geometry, a general assumption is bounded sectional curvature. However, Ye \cite{Y1} studied the properties of the $l$-function and the reduced volume assuming only a lower bound for the Ricci curvature. We now state these useful results below.
	\begin{thm}[Proposition 2.7, Proposition 2.11, and Lemma 2.14 in \cite{Y1}]\label{Y}
Let $(M^n,g(\tau))_{\tau\in[0,T]}$ be a backward Ricci flow such that the Ricci curvature of each time-slice is bounded from below.  Then the following hold:
\begin{enumerate}
\item For any $(q,\tau)\in M\times(0,T]$, there exists a minimal $\mathcal{L}$-geodesic connecting  $(p,0)$ and $(q,\tau)$, that is,
		$\mathcal{L}\text{exp}^{\tau}_p$ is onto. \item  $L$ is locally Lipschitz in space-time.
\item For each $\tau\in(0,T)$, $C(\tau)\subset M$ is a closed set of zero Remannian measure. Consequently $\cup_{0<\tau< T}C(\tau)\times \{\tau\}$
		is a closed set of zero measure in $M\times(0,T)$.
\end{enumerate}
	\end{thm}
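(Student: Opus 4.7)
The plan is to exploit two consequences of the hypothesis $\mathrm{Ric}_{g(\tau)} \geq -Kg(\tau)$: a one-sided metric comparison $g(\tau) \geq e^{-2K\tau}g(0)$, which makes all time slices uniformly comparable from below on any fixed compact set, and the scalar curvature bound $R_{g(\tau)} \geq -nK$. Combined with the change of variable $\sigma = 2\sqrt{s}$ under which $\sqrt{s}|\gamma'(s)|^2 \,ds = |\gamma'(\sigma)|^2 \,d\sigma$, the functional $\mathcal{L}$ becomes a standard kinetic energy plus the bounded lower-order term $\int_0^{2\sqrt{\tau}} \tfrac{\sigma^2}{4} R\, d\sigma$, opening the door to the direct method.

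For part (1), let $\gamma_i$ be a minimizing sequence for $L(q,\tau)$. The scalar bound turns $\mathcal{L}(\gamma_i) \leq C$ into a uniform $L^2$ bound on $\gamma_i'(\sigma)$ in the $\sigma$-parameter. Cauchy--Schwarz then controls the $g(s)$-length of $\gamma_i$ over sub-intervals, and the one-sided metric comparison upgrades this to a uniform $g(0)$-length bound; hence $\gamma_i$ lies in a bounded, and therefore (by completeness of $g(0)$) precompact, region of $M$. Arzel\`a--Ascoli extracts a uniform limit $\gamma_\infty$, and lower semicontinuity of the $\sigma$-energy together with Fatou applied to $\sqrt{s}R$ yields $\mathcal{L}(\gamma_\infty) \leq \liminf_i \mathcal{L}(\gamma_i)$. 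Regularity of $\gamma_\infty$ then follows in standard fashion from the $\mathcal{L}$-geodesic equation \eqref{l-geodesic}.

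Part (2) is handled by building explicit competitor curves. For spatial Lipschitzness, concatenate a minimizing $\mathcal{L}$-geodesic from $(p,0)$ to $(q,\tau)$ with a short connector from $q$ to a nearby $q'$ confined to a slab $[\tau-\delta,\tau]$; the metric equivalence and the scalar bound give $L(q',\tau) \leq L(q,\tau) + C\, d_{g(\tau)}(q,q')$, and symmetry closes the estimate. For time Lipschitzness at $(q,\tau)$, extend or truncate the minimizer by a short constant-in-space tail in time and estimate the added contribution by $|\tau' - \tau|$ using the scalar lower bound and a complementary upper bound on $R$ on the relevant compact set.

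For part (3), the argument mimics the classical Riemannian proof. The map $\mathcal{L}\mathrm{exp}_p^\tau$ is smooth on an open set in $T_pM$ by ODE theory; its critical image (the conjugate locus) is Sard-negligible. A non-conjugate cut point must carry two distinct minimizing $\mathcal{L}$-geodesics, at which $L(\cdot,\tau)$ fails to be differentiable; by (2) and Rademacher, this set is measure zero. Closedness of $C(\tau)$ follows from existence in (1) together with continuous dependence of $\mathcal{L}$-geodesics on endpoints, and the space-time statement follows by a diagonal argument plus Fubini. The main obstacle is the non-escape of the minimizing sequence in (1): without an upper Ricci bound the metrics are not two-sided comparable, so one must lean carefully on the one-sided inequality and the $\sigma$-parametrized energy estimate to prevent escape to infinity.
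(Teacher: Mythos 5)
First, a point of order: the paper contains no proof of this theorem --- it is quoted directly from Ye \cite{Y1} (Propositions 2.7, 2.11 and Lemma 2.14 there), so your proposal can only be compared with the cited source. For parts (1) and (3) your outline does match Ye's (and the standard) arguments: the substitution $\sigma=2\sqrt{s}$, the scalar bound $R\geq -nK$ forced by the Ricci hypothesis, the one-sided comparison $g(\tau)\geq e^{-2K\tau}g(0)$ to confine a minimizing sequence to a fixed $g(0)$-ball, the direct method with lower semicontinuity, and then Sard on the conjugate locus together with Rademacher and non-differentiability of $L$ at points admitting two distinct minimizers (distinct minimizers must arrive with distinct velocities by uniqueness for the ODE (\ref{l-geodesic}), so the first-variation formula forces non-differentiability there). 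This is sound in outline.

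The genuine gap is in the spatial half of part (2). As written, ``concatenate a minimizing $\mathcal{L}$-geodesic from $(p,0)$ to $(q,\tau)$ with a short connector from $q$ to $q'$ confined to $[\tau-\delta,\tau]$'' cannot be performed literally, since the minimizer already occupies all of $[0,\tau]$; one must truncate it at time $\tau-\delta$ and connect $\gamma(\tau-\delta)$ to $q'$. The length of that connector is $d_{g(\tau)}(\gamma(\tau-\delta),q')$, and bounding it by $d_{g(\tau)}(q,q')+C\delta$ requires a pointwise bound on $|\gamma'(s)|$ for $s$ near $\tau$. The energy bound $\int\sqrt{s}\,|\gamma'|^2\,ds\leq C$ alone gives, via Cauchy--Schwarz, only $d(\gamma(\tau-\delta),q)\lesssim \delta^{1/2}$, which after optimizing $\delta$ yields local H\"older-$\tfrac12$ continuity, not Lipschitz continuity. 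The missing step is to choose $s_0\in[\tau-\delta,\tau]$ with $|\gamma'(s_0)|^2\leq C/\delta$ (possible by the energy bound) and then propagate this to $s=\tau$ by integrating the $\mathcal{L}$-geodesic equation on a region where $|Rm|$ and $|\nabla R|$ are locally bounded (automatic by smoothness and Shi's estimates). This Gronwall-type speed estimate is the crux of Ye's Lipschitz proof, and it is exactly the device the present paper deploys in Section 5 (the Claim in the proof of Proposition \ref{l-estiamtes} and the estimate (\ref{beta})). Without it, the Lipschitz claim --- and hence the Rademacher step you rely on in part (3) --- is not justified.
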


Furthermore, we need the following analytic properties of the $l$-function. They were first discovered by Perelman \cite{P1} and vindicated by Ye \cite{Y1} under the assumption of Ricci curvature lower bound.	
	\begin{thm}[\cite{P1}, see also Lemma 2.19 and Theorem 2.20 in \cite{Y1}]\label{Perelman}
		Let $(M^n,g(\tau))_{\tau\in[0,T]}$ be a backward Ricci flow such that the Ricci curvature of each time-slice is bounded from below. Let $ l$ be the reduced distance function defined in (\ref{reduced_l}). Then on $ \cup_{0<\tau<T}\Omega(\tau)\times \{\tau\}$ it holds that:
		\begin{align}
		&2\frac{\partial l}{\partial \tau}+|\nabla l|^2-R+\frac{l}{\tau}=0,\label{eq_l_1}\\
		&	\frac{\partial }{\partial \tau}l-\Delta l +	|\nabla l|^2-R+\frac{n}{2\tau} \geq 0,\label{eq_l_4}\\
		&	2\Delta l-|\nabla l|^2+R+\frac{l-n}{\tau} \le 0.\label{eq_l_5}
		\end{align}
Furthermore, (\ref{eq_l_4}) and (\ref{eq_l_5}) both hold on $M\times (0,T)$ in the sense of distribution. That is to say, for any $0<\tau_1<\tau_2<T$ and for any nonnegative Lipshcitz function $\phi$ compactly supported on $M\times [\tau_1,\tau_2]$, it holds that
\begin{eqnarray}\label{eq_l_6}
\int_{\tau_1}^{\tau_2}\int_M\Bigg(\nabla l\cdot\nabla\phi+\Big(\frac{\partial}{\partial\tau}l+|\nabla l|^2-R+\frac{n}{2\tau}\Big)\phi\Bigg)dg(\tau)d\tau\geq0,
\end{eqnarray}
and, for any $\tau\in(0,T)$ and any nonnegative Lipshcitz function $\phi$ compactly supported on $M$, it holds that
\begin{eqnarray}\label{eq_l_7}
\int_M\Bigg(-2\nabla l\cdot\nabla\phi+\Big(-|\nabla l|^2+R+\frac{l-n}{\tau}\Big)\phi\Bigg)dg(\tau)\leq 0.
\end{eqnarray}
	\end{thm}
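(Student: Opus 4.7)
The plan is to establish the pointwise identities on the smooth locus $\bigcup_{0<\tau<T}\Omega(\tau)\times\{\tau\}$ by variational calculus along minimizing $\mathcal{L}$-geodesics, and then to upgrade the two inequalities to distributional statements on all of $M\times(0,T)$ by barrier arguments at cut points.

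On $\Omega(\tau)$ the map $\mathcal{L}\exp^\tau_p$ is a local diffeomorphism, so $l$ is smooth there. For the first identity (\ref{eq_l_1}), fix $(q,\tau)$ in the smooth locus and let $\gamma$ be the unique minimizing $\mathcal{L}$-geodesic from $(p,0)$ to $(q,\tau)$. The standard first-variation formula with fixed endpoints gives $\nabla L(q,\tau)=2\sqrt{\tau}\,\gamma'(\tau)$; then varying $\tau$ with $q$ fixed (and invoking the critical-point condition to discard the interior variation) gives $\partial_\tau L(q,\tau)=\sqrt{\tau}\bigl(R(q,\tau)-|\gamma'(\tau)|^2\bigr)$. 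Substituting these into $L=2\sqrt{\tau}\,l$ and rearranging yields (\ref{eq_l_1}).

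For the pointwise inequality (\ref{eq_l_5}) I would apply the $\mathcal{L}$-analog of the Hessian comparison. Fix a $g(\tau)$-orthonormal basis $\{e_i\}$ at $q$ and, for each $i$, use the variation field $Y_i(s)=\sqrt{s/\tau}\,E_i(s)$ along $\gamma$, where $E_i$ is the backward parallel transport of $e_i$. Because $\gamma$ is a minimizer, $\sum_i \delta^2\mathcal{L}(Y_i)\ge \Delta L(q,\tau)$. An explicit second-variation calculation, using the contracted second Bianchi identity and integration by parts in $s$ to trade off $\partial_s|Y_i|^2$ against Ricci and scalar-curvature terms, converts this into (\ref{eq_l_5}). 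Adding (\ref{eq_l_1}) to (\ref{eq_l_5}) then produces (\ref{eq_l_4}) on the same smooth locus.

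The main obstacle is extending the two inequalities to distributional statements on all of $M\times(0,T)$, since on $C(\tau)$ the function $l$ is only locally Lipschitz. For the elliptic statement (\ref{eq_l_7}) I would use an $\mathcal{L}$-version of the Calabi upper-barrier trick: at $q_0\in C(\tau_0)$, a minimizing $\mathcal{L}$-geodesic $\gamma_0$ exists by Theorem \ref{Y}(1); a small perturbation of its initial portion produces a smooth upper barrier $\tilde l$ near $q_0$ with $\tilde l\ge l$ and $\tilde l(q_0,\tau_0)=l(q_0,\tau_0)$. The pointwise inequality (\ref{eq_l_5}) applies to $\tilde l$, and a standard partition-of-unity integration against a nonnegative test function, together with the measure-zero property Theorem \ref{Y}(3), delivers (\ref{eq_l_7}). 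A parabolic refinement of the same barrier construction, combined with the identity (\ref{eq_l_1}) and the space-time Lipschitz regularity Theorem \ref{Y}(2), yields (\ref{eq_l_6}). I expect this parabolic extension to be the most delicate step, since $\partial_\tau l$ is not classically defined on the cut locus, so the time derivative must be handled in weak form with care; this is where Ye's refinement of Perelman's original argument becomes essential, as only the Ricci lower bound (rather than two-sided curvature bounds) is available.
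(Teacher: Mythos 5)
The paper does not actually prove Theorem \ref{Perelman}: it is quoted from Perelman's paper as verified under the Ricci-lower-bound hypothesis by Ye (Lemma 2.19 and Theorem 2.20 of \cite{Y1}), so there is no in-paper argument to compare against. Your outline is, in substance, the standard proof from those sources: first variation for (\ref{eq_l_1}), the summed second-variation (index form) inequality for (\ref{eq_l_5}), the observation that (\ref{eq_l_4}) is a linear combination of the first two, and a Calabi-type upper barrier at cut points to pass to the distributional statements. Two details need correction or emphasis if you write this out in full. First, in the second-variation step the test fields should not be $\sqrt{s/\tau}$ times a parallel frame; one takes $Y_i$ solving $\nabla_{\gamma'}Y_i=-Ric(Y_i)+\tfrac{1}{2s}Y_i$, which forces $|Y_i(s)|^2=s/\tau$ and is exactly what makes the trace-Harnack integral $K=\int_0^\tau s^{3/2}H(\gamma')\,ds$ appear in $\Delta L$, $|\nabla l|^2$, and $\partial_\tau l$ with coefficients that cancel in the combination (\ref{eq_l_5}); with a genuinely parallel frame the computation does not close in this clean form. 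Second, the measure-zero property of $C(\tau)$ from Theorem \ref{Y} does not by itself yield (\ref{eq_l_7}): a locally Lipschitz function can satisfy a pointwise Laplacian upper bound off a null set and still violate it distributionally if $\nabla l$ jumps with the wrong sign across that set. It is precisely the upper-barrier property that excludes this (the singular part of the distributional $\Delta l$ on the cut locus is nonpositive), so the barrier step is doing the real work and should not be folded into a ``partition of unity plus measure zero'' remark. Granting these, your route to (\ref{eq_l_6}) via (\ref{eq_l_7}), the a.e.\ identity (\ref{eq_l_1}), and the space-time Lipschitz regularity is sound; note also that in (\ref{eq_l_6}) the time derivative is never integrated by parts, so Rademacher's theorem already handles $\partial_\tau l$ there and the step you flag as most delicate is in fact the mildest one.
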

Formulas (\ref{eq_l_6}) and (\ref{eq_l_7}) will be especially important in our future arguments.

The following is a useful consequence of the first variation formula.
	
\begin{lem}[Perelman \cite{P1}] \label	{grad_ll}
Let $\gamma$ be an $\mathcal{L}$-geodesic starting from $(p,0)$. Then, so long as $\tau\in(0,T)$ and $\gamma(\tau)\in \Omega(\tau)$, it holds that
\begin{eqnarray}
\nabla l(\gamma(\tau),\tau)=\gamma'(\tau).
\end{eqnarray}
\end{lem}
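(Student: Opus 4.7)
The plan is to read off $\nabla l(\gamma(\tau),\tau)$ from the first variation of $\mathcal{L}$-length at a fixed final time $\tau$, using the two properties packaged into the hypothesis $\gamma(\tau)\in\Omega(\tau)$---uniqueness of the minimizer and absence of conjugate points---to convert the variational formula for $\mathcal{L}$ into a formula for $\nabla L$.

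First I would compute the first variation of $\mathcal{L}$ along a smooth family $\gamma_u:[0,\tau]\to M$ with $\gamma_0=\gamma$, $\gamma_u(0)=p$, and variation vector field $Y(s)=\partial_u\gamma_u(s)|_{u=0}$ satisfying $Y(0)=0$ and $Y(\tau)=v$ for an arbitrary $v\in T_{\gamma(\tau)}M$. Differentiating the defining integral (\ref{l-energy}) under the sign, using $\partial g/\partial s=2\,Ric$ to handle the time-dependence of the metric, and integrating by parts in $s$ to move derivatives off $Y$, I obtain
$$\frac{d}{du}\Big|_{u=0}\mathcal{L}(\gamma_u)=2\sqrt{\tau}\,\langle v,\gamma'(\tau)\rangle_{g(\tau)}-2\int_0^{\tau}\sqrt{s}\,\Big\langle Y,\,\nabla_{\gamma'}\gamma'-\tfrac12\nabla R+\tfrac{1}{2s}\gamma'+2\,Ric(\gamma')\Big\rangle\,ds.$$
Because $\gamma$ solves the $\mathcal{L}$-geodesic equation (\ref{l-geodesic}), the bulk integrand vanishes identically, and only the boundary term $2\sqrt{\tau}\,\langle v,\gamma'(\tau)\rangle_{g(\tau)}$ survives.

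Next I would upgrade this identity into a statement about $\nabla L$. Non-conjugacy of $\gamma(\tau)$ to $p$ along $\gamma$ says that the differential of $\mathcal{L}\text{exp}_p^{\tau}$ at the initial vector $v_0:=\lim_{s\to 0}\sqrt{s}\gamma'(s)$ is an isomorphism, so by the inverse function theorem every endpoint $q$ near $\gamma(\tau)$ is attained at time $\tau$ by a unique $\mathcal{L}$-geodesic $\tilde\gamma_q$ depending smoothly on $q$. The uniqueness clause in the definition of $\Omega(\tau)$, extended by a short continuity argument to nearby endpoints, ensures that these perturbed geodesics remain the minimal ones, whence $L(q,\tau)=\mathcal{L}(\tilde\gamma_q)$ in a neighborhood of $\gamma(\tau)$. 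Therefore $L(\cdot,\tau)$ is smooth at $\gamma(\tau)$, and feeding an arbitrary $v$ into the first-variation identity above gives $\langle\nabla L(\gamma(\tau),\tau),v\rangle=2\sqrt{\tau}\,\langle v,\gamma'(\tau)\rangle$, i.e.\ $\nabla L(\gamma(\tau),\tau)=2\sqrt{\tau}\,\gamma'(\tau)$. Dividing by $2\sqrt{\tau}$ in accordance with (\ref{reduced_l}) produces the claimed formula.

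The only real obstacle is promoting $L$ from the locally Lipschitz regularity provided by Theorem \ref{Y} to genuine smoothness at $\gamma(\tau)$. Both halves of the $\Omega(\tau)$-hypothesis are essential here: non-conjugacy supplies, via the inverse function theorem applied to $\mathcal{L}\text{exp}_p^{\tau}$, a smooth family of candidate minimizers with prescribed endpoints, while uniqueness of the minimizer (propagated to nearby endpoints by continuity) guarantees that these candidates actually realize $L$. Once that step is in place, the rest is just the first-variation computation above.
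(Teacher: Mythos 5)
Your argument is correct and is exactly the intended one: the paper gives no proof of this lemma, merely citing Perelman and noting that it is ``a useful consequence of the first variation formula,'' which is precisely the computation you carry out (your boundary term $2\sqrt{\tau}\langle v,\gamma'(\tau)\rangle$ and the vanishing of the bulk term via (\ref{l-geodesic}) are the standard identities, and dividing by $2\sqrt{\tau}$ as in (\ref{reduced_l}) gives the claim). Your handling of the regularity of $L$ at points of $\Omega(\tau)$ --- non-conjugacy plus uniqueness of the minimizer, propagated to nearby endpoints --- is the standard justification, consistent with the identification $\Omega(\tau)=\mathcal{L}\mathrm{exp}^{\tau}_p(\Omega^{T_pM}(\tau))$ recalled in Section 2.
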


The following monotonicity formula is also a well-known result of Perelman.

	\begin{thm}[Perelman\cite{P1}, see also Theorem 4.3 and Theorem 4.5 in \cite{Y1}]\label{Monotonicity}
Let $(M^n,g(\tau)_{\tau\in[0,T]})$ be a backward Ricci flow such that the Ricci curvature of each time-slice is bounded from below. Then
 \begin{enumerate}
 \item $\mathcal {V}(\tau)\leq 1$ for all $\tau\in(0,T]$,
 \item $\mathcal {V}(\tau)$ is non-increasing in $\tau$,
 \end{enumerate}
 where $\mathcal {V}(\tau)$ is the reduced volume defined in (\ref{WFRV}).
	\end{thm}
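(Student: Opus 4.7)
The plan is to treat both statements in tandem by working with the conjugate-heat-type quantity $u(x,\tau) := (4\pi\tau)^{-n/2}e^{-l(x,\tau)}$, so that $\mathcal V(\tau) = \int_M u\,dg(\tau)$. A direct computation, expressing $\partial_\tau u$, $\nabla u$ and $\Delta u$ in terms of $l$, yields
\begin{eqnarray*}
\partial_\tau u - \Delta u + Ru = -u\Bigl(\partial_\tau l - \Delta l + |\nabla l|^2 - R + \tfrac{n}{2\tau}\Bigr).
\end{eqnarray*}
By \eqref{eq_l_4} the right-hand side is nonpositive on $\bigcup_{0<\tau<T}\Omega(\tau)\times\{\tau\}$, and by \eqref{eq_l_6} the inequality $\partial_\tau u - \Delta u + Ru \leq 0$ propagates to all of $M\times(0,T)$ in the distributional sense.

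To derive the monotonicity in (ii), I would choose a family of Lipschitz spatial cutoffs $\phi_R$ equal to $1$ on $B_{g(0)}(p,R)$, supported in $B_{g(0)}(p,2R)$, with $|\nabla\phi_R|_{g(\tau)}\leq C/R$ uniformly on $[\tau_1,\tau_2]$. Inserting $\phi = u\phi_R$ into \eqref{eq_l_6} and using $\partial_\tau(dg(\tau)) = R\,dg(\tau)$ for the backward Ricci flow produces, after rearrangement,
\begin{eqnarray*}
\int_M u\phi_R\,dg(\tau_2) - \int_M u\phi_R\,dg(\tau_1) \;\leq\; \int_{\tau_1}^{\tau_2}\!\!\int_M u\,\nabla l\cdot\nabla\phi_R\,dg(\tau)\,d\tau.
\end{eqnarray*}
Passing $R\to\infty$ requires that $u$ decay rapidly enough that the annular cutoff error vanishes; this is supplied by Ye's quadratic lower bound of the form $l(q,\tau)\geq c\,d_{g(\tau)}^2(q,p)/\tau - C$, which produces Gaussian spatial decay for $u$ and absorbs the $|\nabla l|/R$ factor on the right. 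The limit delivers $\mathcal V(\tau_2)\leq\mathcal V(\tau_1)$.

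For (i) it suffices to establish $\lim_{\tau\to 0^+}\mathcal V(\tau) = 1$, since then the monotonicity from the previous step forces $\mathcal V(\tau)\leq 1$ throughout. I would change variables via the $\mathcal L$-exponential map and the zero-measure property of $C(\tau)$ to write
\begin{eqnarray*}
\mathcal V(\tau) = \int_{\Omega^{T_pM}(\tau)}(4\pi\tau)^{-n/2}e^{-l(\gamma_v(\tau),\tau)}J_v(\tau)\,dv,
\end{eqnarray*}
where $J_v(\tau)$ is the Jacobian of $\mathcal L\exp^\tau_p$. With the rescaling $v\mapsto w/\sqrt\tau$, the standard short-time analysis of the $\mathcal L$-Jacobi equation shows the integrand converges, uniformly on compact subsets of $w$-space, to the Euclidean Gaussian $(4\pi)^{-n/2}e^{-|w|^2/4}$; dominated convergence, once again backed by the quadratic lower bound for $l$ to furnish a global integrable majorant, yields $\mathcal V(\tau)\to 1$.

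The principal obstacle is controlling the noncompactness under only a Ricci lower bound. Without an upper curvature bound one cannot invoke the exponential decay estimates of Perelman's original proof directly, and the cutoff error term involves $|\nabla l|$, which has no pointwise bound on annuli a priori. The resolution hinges on Ye's Gaussian-type lower bound for $l$, which simultaneously provides the integrability of $u$ and enough decay to swallow the $|\nabla l|/R$ factor. A secondary but milder point is that the cut locus is nontrivial from the outset, so the distributional formulation \eqref{eq_l_6} and the measure-zero statement in Theorem \ref{Y} must be invoked at each step in lieu of a global smooth computation.
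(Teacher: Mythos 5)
The paper offers no proof of this theorem: it is quoted from Perelman and from Ye (Theorems 4.3 and 4.5 of \cite{Y1}), so your attempt must be measured against the cited argument. That argument is structurally different from yours: it pulls the reduced volume back through the $\mathcal{L}$-exponential map and proves that the integrand $\tau^{-n/2}e^{-l(\gamma_v(\tau),\tau)}J_v(\tau)$ (with $J_v$ the $\mathcal{L}$-Jacobian) is \emph{pointwise} non-increasing in $\tau$ along each $\mathcal{L}$-geodesic, with limit $2^n e^{-|v|^2}$ as $\tau\to0^+$. Combined with the fact that the domains $\Omega^{T_pM}(\tau)$ are nested and that the cut locus is null (Theorem \ref{Y}), this fiberwise inequality yields (2) by integration and (1) by comparison with the Gaussian, with no integration by parts on the noncompact manifold and hence no decay estimates needed. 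Your route --- treating $(4\pi\tau)^{-n/2}e^{-l}$ as a distributional subsolution of the conjugate heat equation via (\ref{eq_l_4})/(\ref{eq_l_6}) and running a cutoff argument --- is the strategy the paper itself deploys later (Section 6) for the rescaled flows, where extra local bounds are available.

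Two gaps. First, in your cutoff step the error term is $\int u\,\nabla l\cdot\nabla\phi_R$, and Gaussian decay of $u$ alone does not control $|\nabla l|$, which has no pointwise bound under a mere Ricci lower bound; you need the weighted energy bound $\int_M |\nabla l|^2 u\,dg(\tau)<\infty$, which must be extracted separately from (\ref{eq_l_7}) by the absorption trick the paper uses in Lemma \ref{grad} (and that extraction itself presupposes $\mathcal V(\tau)<\infty$, which you should establish first from Ye's quadratic lower bound on $l$ together with Bishop--Gromov volume growth). Second, and more seriously, your proof of (1) needs a $\tau$-uniform integrable majorant in $v$-space to pass to the limit $\mathcal V(\tau)\to1$; the quadratic lower bound for $l$ in terms of $d_{g(\tau)}(p,\cdot)$ does not supply this, because the integrand also contains the Jacobian $J_v(\tau)$, whose upper bound in terms of $|v|$ uniformly as $\tau\to0$ is exactly the content of the $\mathcal{L}$-Jacobian monotonicity you chose not to prove. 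In other words, the one estimate your scheme is missing is the estimate that makes the cited proof work in one stroke; without it, part (1) is not closed.
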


\section{Construction of the Ancient Solution}
	
In this section, we will use a given shrinking breather to construct an ancient solution. This method was first applied by Lu and Zheng \cite{LZ}, and later adopted by the second author \cite{Zh}. The content of this section is not essentially different from \cite{LZ} and \cite{Zh}, and we are including it for the convenience of the reader. The idea of the construction, so to speak, is to take infinitely many copies of the given shrinking breather, each scaled by a proper factor, and then splice them together head-to-tail. The crucial estimate arising from this construction is this, that if we fix a base point on the given shrinking breather, then the reduced distance evaluated at the base point on each copy is uniformly bounded. In other cases, this fact is also essential to the existence of the asymptotic shrinking gradient Ricci soliton; see \cite{P1} and \cite{N}.  This estimate is the first step to the locally uniform estimates for both the $l$-function and its gradient, which we will present in section 5.

	After rescaling and translating in time, we may assume that the shrinking breather is a backward Ricci flow $(M^n,g_0(\tau))_{\tau\in[0,1]}$, such that there exists $\alpha\in(0,1)$ and a diffeomorphism $\phi:M\to M$ satisfying
	\begin{equation}\label{breather}
	\alpha g_0(1)=\phi^*g_0(0).
	\end{equation}
We first of all observe

\begin{lem}\label{smooth}
$(M,g_0(\tau))_{\tau\in[0,1]}$ is smooth on $[0,1]$. In particular, $g_0(\tau)$ is smooth up to the boundary of the interval $[0,1]$.
\end{lem}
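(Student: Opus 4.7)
The plan is to use the breather identity as a gluing map that turns both endpoints $\tau=0$ and $\tau=1$ into interior points of a larger backward Ricci flow. A continuous family of smooth metrics that solves $\partial_\tau g = 2\,\mathrm{Ric}$ on both sides of a seam and agrees at the seam is automatically smooth across it, so once $\tau=0$ and $\tau=1$ sit in the interior of an extended solution, $g_0$ is forced to be smooth on $[0,1]$.

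To extend past $\tau=1$, I would set
\[
\tilde g_0(\tau) \;:=\; \alpha^{-1}\phi^{*} g_0\bigl(\alpha(\tau-1)\bigr), \qquad \tau\in[1,\,1+1/\alpha].
\]
The diffeomorphism invariance and parabolic scaling of the backward Ricci flow equation make $\tilde g_0$ a backward Ricci flow on this interval, and the breather identity $\alpha g_0(1)=\phi^{*} g_0(0)$ forces $\tilde g_0(1)=g_0(1)$, so $\tilde g_0$ glues to $g_0$ continuously at $\tau=1$. A mirror construction, using $\phi^{-1}$ and the reciprocal rescaling, produces an extension $\bar g_0(\tau):=\alpha(\phi^{-1})^{*} g_0(1+\tau/\alpha)$ on $[-\alpha,0]$ with $\bar g_0(0)=g_0(0)$.

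To finish, I would check that the glued family is actually smooth across each seam. At the seam the two pieces share the same spatial metric, so the first-order one-sided time derivatives both equal $2\,\mathrm{Ric}$ of that common metric by the flow equation. Iterating, every $\partial_\tau^{k} g$ at the seam is a universal polynomial expression in the spatial covariant derivatives of $g$, obtained by repeatedly differentiating $\partial_\tau g = 2\,\mathrm{Ric}(g)$ and eliminating time derivatives, so all one-sided time-jets at the seam agree from both sides. A cleaner alternative is to apply the DeTurck trick, which recasts the equation as a strictly parabolic system whose interior regularity theory directly forces smoothness at the seam.

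The only subtle point is this $C^\infty$ matching at the seams; everything else---the validity of the extension formulas, their $C^0$ compatibility at the glue points, and the fact that each piece solves the backward Ricci flow equation---is immediate from the diffeomorphism and scaling symmetries of the equation. Once the matching is verified, $\tau=0$ and $\tau=1$ lie in the interior of a smooth extended backward Ricci flow, and the lemma follows.
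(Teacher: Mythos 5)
Your gluing step is fine as far as it goes (the extension formulas are correct, they do solve the backward flow, and they match $g_0$ continuously at $\tau=0$ and $\tau=1$), but the step that is supposed to ``finish'' the proof is circular. To run the iteration ``every $\partial_\tau^k g$ at the seam is a universal polynomial in the spatial derivatives of $g$,'' you must already know that each piece admits one-sided time derivatives of all orders at the seam and that these are computed by differentiating the flow equation up to the boundary --- and that is exactly the assertion of the lemma (smoothness of $g_0$ up to the endpoints of $[0,1]$), transported to the seam by your own change of variables. Making $\tau=0$ and $\tau=1$ interior points of a continuous, piecewise-Ricci-flow family does not by itself confer regularity there: ``a continuous family solving the equation on both sides of a seam is smooth across it'' is not an elementary interior-regularity fact for this degenerate-parabolic, gauge-invariant system, and its proof requires precisely the two boundary-regularity inputs you are trying to avoid. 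The parabolic asymmetry matters here: the side of the seam that is the forward-time \emph{past} is smooth up to the seam by localized Shi estimates (regularity improves forward in time), but the side that is the forward-time \emph{future} is a Ricci flow viewed from its \emph{initial} time, and smoothness there does not follow from Shi or from formally iterating the equation --- it is a genuine initial-time regularity statement requiring that the initial slice be smooth plus a DeTurck/uniqueness argument. Your one-sentence DeTurck remark points at the right tool, but it is not carried out, and carrying it out (choice of background metric, construction of the gauge diffeomorphisms, bootstrapping a merely continuous-in-time solution) is essentially the content of the Lu--Tian appendix.

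For comparison, the paper's proof does not glue at all: it applies localized Shi's estimates at $\tau=0$ (the forward-final time) to conclude that $g_0$ is smooth up to $\tau=0$ and in particular that $g_0(0)$ is a smooth metric; it then uses the breather identity $\alpha g_0(1)=\phi^*g_0(0)$ only to conclude that the slice $g_0(1)$ is smooth; and finally it invokes the Lu--Tian appendix to upgrade smoothness of the initial slice to smoothness of the flow up to $\tau=1$. To repair your argument you would need to supply exactly these two ingredients at the seams, at which point the gluing becomes superfluous.
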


\begin{proof}
According to the localized Shi's estimates \cite{Sh}, we have that $g_0(0)$ must be a smooth Riemannian metric. Since $(M,g_0(0))$ and $(M,g_0(1))$ are isometric, we have that $g_0(1)$ is also smooth. Then by the Appendix in \cite{LT}, we have that $g_0(\tau)$ is smooth up to $\tau=1$.
\end{proof}

	Let
	\begin{equation}\label{tau}
	\tau_i=\sum^i_{k=0}\alpha^{-k},
	\end{equation}
	where we conventionally let $\tau_0=1$. Then there exists $C_0>0$ depending only on $\alpha$ such that
	\begin{equation}\label{ration}
	\alpha^{-i}\le\tau_i\le C_0\alpha^{-i},
	\end{equation}
	for each $i\ge 0$. Next, we define
	\begin{equation*}
	g_i(\tau)=\alpha^{-i}(\phi^i)^*g_0(\alpha^{i}(\tau-\tau_{i-1})), \tau\in [\tau_{i-1},\tau_i].
	\end{equation*}
	Splicing these flows together, we define the evolving metric as
	\begin{equation}\label{defined_ancient_solution}
	g(\tau)=\left\{
	\begin{array}{ll}
	g_0(\tau), \quad & \tau\in [0,\tau_0], \\
	g_i(\tau), \quad &\tau\in [\tau_{i-1},\tau_i].
	\end{array}
	\right.
	\end{equation}
We need to verify that $g(\tau)$ is a smooth Ricci flow. When $g_0(\tau)$ has bounded curvature, this can be easily done by applying the forward and backward uniqueness of the Ricci flow at time $\tau_i$; see \cite{LZ} and \cite{Zh}. In our case, we verify the smoothness of $g(\tau)$ by direct computation.
	
	\begin{lem}\label{smooth_ancient_solution}
		 $g(\tau)$ defined in (\ref{defined_ancient_solution}) is a smooth ancient solution to the Ricci flow on the manifold $M$.
	\end{lem}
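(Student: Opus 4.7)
The plan is to verify three things in order: first that each individual piece $g_i(\tau)$ satisfies the backward Ricci flow equation on its own interval; second that consecutive pieces agree at the junction times $\tau_i$ as Riemannian metrics; and third that smoothness is not lost across the junctions.

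For the first step, I would start from the fact, already noted in Lemma \ref{smooth}, that $g_0(\tau)$ is a smooth backward Ricci flow on $[0,1]$, smooth up to the endpoints. Since the backward Ricci flow equation $\partial_\tau g = 2\operatorname{Ric}_g$ is invariant under the parabolic rescaling $g \mapsto c\, g(\cdot/c)$ and under pullback by a diffeomorphism (because $\operatorname{Ric}$ is scale-invariant as a $(0,2)$-tensor and natural under diffeomorphisms), a direct chain-rule computation shows
\begin{equation*}
\frac{\partial g_i}{\partial \tau}(\tau)
= \alpha^{-i}\cdot\alpha^{i}\,(\phi^i)^{*}\!\left.\frac{\partial g_0}{\partial s}\right|_{s=\alpha^i(\tau-\tau_{i-1})}
= 2\,(\phi^i)^{*}\operatorname{Ric}_{g_0(s)} = 2\operatorname{Ric}_{g_i(\tau)}
\end{equation*}
for $\tau\in[\tau_{i-1},\tau_i]$. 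Hence each $g_i$ solves the backward Ricci flow on its slab.

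For the second step, I would compute both one-sided limits at $\tau=\tau_i$ and use the breather relation $\alpha g_0(1)=\phi^{*}g_0(0)$. On one side $g_i(\tau_i)=\alpha^{-i}(\phi^i)^{*}g_0(1)$ because $\alpha^i(\tau_i-\tau_{i-1})=1$, while on the other side $g_{i+1}(\tau_i)=\alpha^{-(i+1)}(\phi^{i+1})^{*}g_0(0)$. Substituting the breather relation into the latter gives exactly $\alpha^{-i}(\phi^i)^{*}g_0(1)$, so the two pieces match as $C^{0}$ tensors at each junction. (A symmetric check works for the other endpoint of each slab.) Thus $g(\tau)$ is continuous in $\tau$.

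For the third step --- smoothness across $\tau=\tau_i$ --- I would argue that on each side of $\tau_i$, the Ricci flow equation forces every $\tau$-derivative of $g$ at $\tau_i$ to be a universal polynomial expression in $g$ and its spatial covariant derivatives at $\tau_i$: one obtains $\partial_\tau g = 2\operatorname{Ric}_g$, $\partial_\tau^2 g = 2\partial_\tau \operatorname{Ric}_g$, and so on, with each higher derivative expressible inductively from lower ones via the flow equation. Because Lemma \ref{smooth} ensures that $g_0$ is smooth up to $\tau=0$ and $\tau=1$, each $g_i$ is smooth up to the endpoints of $[\tau_{i-1},\tau_i]$, so all of these one-sided derivatives exist. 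The agreement $g_i(\tau_i)=g_{i+1}(\tau_i)$ established in step two then forces all the one-sided time derivatives to agree from both sides, yielding $C^{\infty}$-smoothness in space-time at the junction. Smoothness in space away from the junctions is immediate from smoothness of each piece.

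The main obstacle, and also the reason for Lemma \ref{smooth}, is precisely the smoothness across junctions: one really needs $g_0$ to be smooth up to the endpoints of $[0,1]$ in order to make sense of the one-sided higher $\tau$-derivatives and run the inductive matching. Without that, one would have to invoke forward/backward uniqueness as in \cite{LZ} and \cite{Zh}, which requires bounded curvature and thus is unavailable in our setting. Once smoothness at each $\tau_i$ is in hand, $g(\tau)$ is defined for all $\tau\in[0,\infty)$ and solves $\partial_\tau g = 2\operatorname{Ric}_g$, so it is a smooth ancient solution as claimed.
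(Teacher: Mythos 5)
Your proof is correct and follows essentially the same route as the paper: reduce to smoothness at the junction times $\tau_i$, verify that consecutive pieces agree there via the breather relation, and then match all one-sided $\tau$-derivatives using the fact that $g_0$ is smooth up to the endpoints of $[0,1]$ (Lemma \ref{smooth}). The only difference is in packaging: the paper matches the derivatives by explicit computation for $k=1,2$ (pulling back $Rc$ and $\Delta_L Rc$ and tracking the scaling factors) and asserts the general case, whereas your observation that the flow equation determines $\partial_\tau^k g$ as a universal spatial expression in the metric at that time carries out the same induction more cleanly.
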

	\begin{proof}
		By Lemma \ref{smooth},  we need only to check the smoothness of $g(\tau)$ at each $\tau_i$.
		Applying (\ref{breather}), we get
		\begin{align*}
		&g_i(\tau_{i-1})=\alpha^{-i}(\phi^i)^*g_0(0)
		=\alpha^{-(i-1)}(\phi^{i-1})^*g_0(1)\\
		=&\alpha^{-(i-1)}(\phi^{i-1})^*g_0(\alpha^{i-1}(\tau_{i-1}-\tau_{i-2}))=g_{i-1}(\tau_{i-1}).
		\end{align*}
		Since by (\ref{breather}) and (\ref{defined_ancient_solution}) we have
		\begin{align*}
		\frac{\partial_{+}}{\partial \tau}
		g_i(\tau_{i-1})=(\phi^i)^*\frac{\partial_{+}}{\partial \tau}g_0(0)=(\phi^i)^*Rc(g_0(0))
		\end{align*}
		 and
		\begin{align*}
		\frac{\partial_{-}}{\partial \tau}
		g_{i-1}(\tau_{i-1})=(\phi^{i-1})^*\frac{\partial_{-}}{\partial \tau}g_0(1)=(\phi^{i-1})^*Rc(g_0(1))=(\phi^i)^*Rc(g_0(0)),
		\end{align*}
		where $\displaystyle\frac{\partial_{+}}{\partial\tau}$ and $\displaystyle\frac{\partial_{-}}{\partial\tau}$ stand for the right and the left derivatives, respectively, it then follows that
		$$\frac{\partial_{-}}{\partial \tau}
		g_{i-1}(\tau_{i-1})=\frac{\partial_{+}}{\partial \tau}
		g_{i-1}(\tau_{i-1}).
		$$
		Moreover, because of the smoothness of $g_0(\tau)$ up to $\tau=0$ and $\tau=1$, from the following calculations
		\begin{align*}
		\frac{\partial_{+}^2}{\partial \tau^2}
		g_i(\tau_{i-1})=\alpha^i(\phi^i)^*(\frac{\partial_{+}}{\partial \tau}Rc)(g_0(0)),
		\end{align*}
		\begin{align*}
		\frac{\partial_{-}^2}{\partial \tau^2}
		g_{i-1}(\tau_{i-1})=\alpha^{i-1}(\phi^{i-1})^*(\frac{\partial_{-}}{\partial \tau}Rc)(g_0(1)),
		\end{align*}
		and
		\begin{align*}
		(\frac{\partial_{-}}{\partial \tau}Rc)(g_0(1))=(\Delta_L Rc)(g_0(1))=\alpha \phi^*((\Delta_L Rc)(g_0(0)))=\alpha \phi^*((\frac{\partial_{+}}{\partial \tau}Rc)(g_0(0))),
		\end{align*}
		we conclude that
		\begin{align*}
		\frac{\partial_{+}^2}{\partial \tau^2}
		g_i(\tau_{i-1})=\frac{\partial_{-}^2}{\partial \tau^2}
		g_{i-1}(\tau_{i-1}).
		\end{align*}
		It is straightforward to prove
		$
		\frac{\partial_{+}^k}{\partial \tau^k}
		g_i(\tau_{i-1})=\frac{\partial_{-}^k}{\partial \tau^k}
		g_{i-1}(\tau_{i-1})
		$ for $k\geq 3$
		in the similar way.
	\end{proof}
Chen \cite{C} showed that any ancient solution to the Ricci flow has nonnegative scalar curvature, and any 3-dimensional ancient solution has nonnegative sectional curvature. As a direct application of Lemma \ref{smooth_ancient_solution},
we have the following corollary.
	\begin{cor}\label{positive_curvature}
	Any complete shrinking breather has nonnegative scalar curvature. In particular, any 3-dimensional complete shrinking breather has nonnegative sectional curvature.
\end{cor}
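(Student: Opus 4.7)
The plan is simply to combine Lemma \ref{smooth_ancient_solution} with the theorem of Chen cited just before the corollary. Start with a complete shrinking breather $(M,g(t))_{t\in[0,T]}$. After the normalization discussed at the beginning of this section (a parabolic rescaling of the metric by a positive constant, a translation in time, and passage to backward time $\tau$), one may assume the breather has the form $(M,g_0(\tau))_{\tau\in[0,1]}$ satisfying the breather relation \eqref{breather} with $\alpha\in(0,1)$. None of these operations changes the sign of the scalar curvature or of the sectional curvatures, so it suffices to prove the conclusion for this normalized flow.

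Next, apply Lemma \ref{smooth_ancient_solution} to produce the spliced metric $g(\tau)$ on $M\times[0,\infty)$, which is a smooth complete ancient solution of the backward Ricci flow whose restriction to $[0,\tau_0]=[0,1]$ equals $g_0(\tau)$. Because the splicing is an isometric reparametrization of scaled copies of the original breather, the underlying manifold $M$ remains connected and complete at every time slice, so this really is a complete ancient solution in the sense required by Chen \cite{C}. Invoking Chen's theorem gives $R_{g(\tau)}\geq 0$ on $M\times[0,\infty)$, and in the three-dimensional case $\mathrm{sec}_{g(\tau)}\geq 0$ on $M\times[0,\infty)$. Restricting to $\tau\in[0,1]$ and undoing the normalization yields the claim for the original breather.

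The argument has no real obstacle: the work was all absorbed into Lemma \ref{smooth_ancient_solution}. The only point worth stating carefully in the write-up is the invariance of the sign of scalar (and sectional) curvature under rescaling, time-translation, and reversal, together with the observation that the splicing construction is compatible with completeness, so that Chen's hypotheses are genuinely satisfied.
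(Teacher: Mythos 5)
Your argument is exactly the paper's: the corollary is stated there as a direct application of Lemma \ref{smooth_ancient_solution} together with Chen's theorem on ancient solutions, which is precisely the route you take. Your write-up correctly fills in the (immediate) details of normalization and restriction to the original time interval.
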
	
	
Let us choose a sequence of points on this ancient flow, and estimate the $l$-function at these points. Indeed, these points will serve as the base points of the Cheeger-Gromov-Hamilton convergence \cite{RH5}. Fix an arbitrary point $p_0\in M^n$, and for each $i\geq 0$ define
	\begin{equation} \label{basepoint}
	x_i=\phi^{-{(i+1)}}(p_0).
	\end{equation}
 Let $\sigma:[0,1]\to M^n$ be a smooth curve such that $\sigma(0)=p_0$ and $\sigma(1)=x_0$.
	We define
	\begin{equation}
	\sigma_i(\tau)=\phi^{-{(i+1)}}\circ\sigma\big(\alpha^{i+1}(\tau-\tau_{i})\big), \tau\in
	[\tau_{i},\tau_{i+1}],
	\end{equation}\label{gamma}
	and $\gamma_i:[0,\tau_{i+1}]\to M$ as
	\begin{equation}\label{gamma}
	\gamma_i(\tau)=\left\{
	\begin{array}{ll}
	\sigma(\tau), \quad & \tau\in [0,1], \\
	\sigma_j(\tau), \quad &\tau\in [\tau_{j},\tau_{j+1}].
	\end{array}
	\right.
	\end{equation}
	Since
	\begin{equation*}
	\sigma_i(\tau_{i})=\phi^{-(i+1)}\circ\sigma(0)=\phi^{-i}\circ\sigma(1)=\phi^{-i}\circ\sigma(\alpha^{i}(\tau_{i}-\tau_{i-1}))=\sigma_{i-1}(\tau_{i}),
	\end{equation*}
	we have that $\gamma_i(\tau)$ defined in (\ref{gamma}) is a continuous and piecewise smooth
	curve with $\gamma_i(0)=p_0$ and $\gamma_i(\tau_i)=x_{i+1}$.
	Since for $\tau\in[\tau_{j},\tau_{j+1}]$
	\begin{align*}
	&R(\sigma_j(\tau),\tau)=R_{g_{j+1}(\tau)}(\sigma_j(\tau))\\
	=&
	R_{\alpha^{-(j+1)}(\phi^{j+1})^*g_0(\alpha^{j+1}(\tau-\tau_{j}))}\Big(\phi^{-(j+1)}\circ\sigma\big(\alpha^{j+1}(\tau-\tau_{j})\big)\Big)\\
	=&\alpha^{j+1}
	R_{g_0(\alpha^{j+1}(\tau-\tau_{j}))}(\sigma(\alpha^{j+1}(\tau-\tau_{j})))\le C_1 \alpha^{j+1}\\
	\le& \tau_{j+1}\alpha^{j+1}\frac{C_1}{\tau}\le \frac{B}{\tau},
	\end{align*}
	where $\displaystyle C_1=\max\limits_{\tau\in [0,1]}R_{g_0(\tau)}(\sigma(\tau))$.
	We can then compute the $\mathcal{L}$-energy for $\gamma(\tau)$ defined in (\ref{gamma})
	\begin{align*}
	\mathcal{L}(\gamma_i)&=\mathcal{L}(\sigma)+\sum\limits_{j=1}^{i}
	\int\limits_{\tau_{j}}^{\tau_{j+1}}\sqrt{\tau}(R(\sigma_j(\tau),\tau)+|\sigma_j'(\tau)|^2_{g(\tau)})d\tau\\
	&\le D+\sum\limits_{j=1}^{i}
	\int\limits_{\tau_{j}}^{\tau_{j+1}}\sqrt{\tau}(\frac{B}{\tau}+A\alpha^{j+1})d\tau\\
	&\le D+C\sum\limits_{j=1}^{i}
	\alpha^{-\frac{j+1}{2}},
	\end{align*}
	where $A=\max\limits_{\tau\in [0,1]}|\sigma'(\tau)|^2_{g_0(\tau)}$.
	Hence
	\begin{align*}
	l(x_{i+1},\tau_{i+1})&\le\frac{\mathcal{L}(\gamma_i)}{2\sqrt{\tau_{i+1}}}\\
	&\le \frac{1}{2}D\alpha^\frac{j+1}{2}+\frac{1}{2}C\sum\limits_{j=1}^{i}
	\alpha^{\frac{j+1}{2}}\le C_2,
	\end{align*}
where $C_2$ is a constant independent of $i$.

\section{Locally Uniformly Type I Ancient Solutions}

In this section, we will consider a certain type of ancient solutions to the Ricci flow of which (\ref{defined_ancient_solution}) is a special case. As in \cite{P1} and \cite{N}, we will eventually consider the rescaled sequence of
	backward Ricci flows
\begin{eqnarray}\label{ancient_sequence}
	(M,\tau_i^{-1}g(\tau_i\tau),(x_i,1)) \text{\ \ for\ \ }  \tau\in[0,\infty),
\end{eqnarray}
where $\tau_i$ is as defined in (\ref{tau}) and $g(\tau)$ as defined in (\ref{defined_ancient_solution}). It turns out that, around the space-time base points $(x_i, 1)$, every member of this sequence differs the original shrinking breather only by an isometry and a bounded constant. More specifically,  (\ref{defined_ancient_solution}) implies that  for  $\tau\in [1,\frac{\tau_{i+1}}{\tau_{i}}]$, it holds that $$\tau_i^{-1}g(\tau_i\tau)=\tau_i^{-1}\alpha^{-(i+1)}(\phi^{i+1})^*
	g_0(\alpha^{i+1}(\tau_i\tau-\tau_i)),$$
where the scaling factors $\tau_i^{-1}\alpha^{-(i+1)}$ are uniformly bounded from above and below by constants independent of $i$; indeed, this factor converges to $1$. This means that around the base points $(x_i, 1)$, the scaled metrics $\tau_i^{-1}g(\tau_i\tau)$ have locally uniformly bounded geometry: the curvature norms are bounded in terms of distance from $x_i$ and the injectivity radii at $(x_i, 1)$ are bounded independent of $i$. This is model on which the following definition is based.

\begin{defn}\label{def_typei}
\textbf{(Locally uniformly Type I ancient solution.)} Let $(M,g(\tau))_{\tau\in[0,\infty)}$ be an ancient solution to the Ricci flow, where $\tau$ is the backward time. Fix $p_0\in M$ and let $\{(x_i,\tau_i)\}_{i=1}^\infty\subset M\times(0,\infty)$ be a sequence of space-time points with $\tau_i\nearrow\infty$. $(M,g(\tau))$ is called \emph{locally uniformly Type I} along the space-time sequence $\{(x_i,\tau_i)\}_{i=1}^\infty$, if the following hold.

\begin{enumerate}[(1)]
 \item \begin{eqnarray}\label{def1}
 \limsup_{i\rightarrow\infty}l(x_i,\tau_i)<\infty,
 \end{eqnarray}
where $l$ is the reduced distance function based at $(p_0,0)$.

\item Around the space-time points $(x_i,\tau_i)$, the curvature is locally uniformly Type I. More precisely, there exists a positive function $C:(0,\infty)\rightarrow(0,\infty)$ with the following property: for all $A>0$, there exists $i_0\in\mathbb{N}$, depending on $A$, such that
    \begin{eqnarray}\label{def2}
    \sup_{B_{g(\tau_i)}(x_i,\sqrt{\tau_i}r)\times[\tau_i,2\tau_i]}|Rm_{g(\tau)}|\leq \frac{C(r)}{\tau_i} \text{ for all }i\geq i_0 \text{ and } r\leq A.
    \end{eqnarray}

\item There is time-wise Ricci curvature lower bound for $g(\tau)$. In other words, there exists a positive function $K:[0,\infty)\rightarrow(0,\infty)$, such that
    \begin{eqnarray}\label{def3}
    Ric_{g(\tau)}\geq-K(\tau)g(\tau)
    \end{eqnarray}
    for all $\tau\in[0,\infty)$.

\item $g(\tau)$ is noncollapsed along $(x_i,\tau_i)$. In other words,
    \begin{eqnarray}\label{def4}
    \liminf_{i\rightarrow\infty} \Big((\tau_i)^{-\frac{1}{2}}\text{inj}(g(\tau_i),x_i)\Big)>0,
    \end{eqnarray}
    where $\text{inj}(g, x)$ stands for the injectivity radius of the Riemannian metric $g$ at $x$.

\end{enumerate}

\end{defn}

According to the argument at the beginning of this section, we have that an ancient solution constructed with a shrinking breather with Ricci curvature bounded from below must be locally uniformly Type I.

\begin{prop}\label{somethinguseful}
Let $(M,g_0(\tau))_{\tau\in[0,1]}$ be a shrinking breather as defined in (\ref{breather}) with Ricci curvature bounded from below, and $(M,g(\tau))_{\tau\in[0,\infty)}$ the ancient solution as defined in (\ref{defined_ancient_solution}). Let $p_0\in M$ be a fixed point, $x_i=\phi^{-{(i+1)}}(p_0)$, and $\displaystyle\tau_i=\sum^i_{k=0}\alpha^{-k}$. Then, $(M,g(\tau))_{\tau\in[0,\infty)}$ is a locally uniformly Type I ancient solution along $\{(x_i,\tau_i)\}_{i=1}^\infty$.

Furthermore, the scaled flows $\displaystyle \big(M,\tau_i^{-1}g(\tau\tau_i),(x_i,1)\big)_{\tau\in[1,\frac{\tau_{i+1}}{\tau_i}]}$ differ the original shrinking breather $\displaystyle \big(M,g_0(\tau-1),(p_0,1)\big)_{\tau\in[1,2]}$ only by a scaling factor $\tau_i^{-1}\alpha^{-(i+1)}$ and a base-point-preserving diffeomorphism $\phi^{-(i+1)}$.
\end{prop}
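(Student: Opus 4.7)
The plan is to verify each of the four conditions in Definition \ref{def_typei} and then read off the ``furthermore'' identification, all directly from the block-wise formula (\ref{defined_ancient_solution}) together with the $\mathcal{L}$-energy calculation already performed at the end of Section 3. There is no deep obstacle here; the whole argument is bookkeeping with the scaling factor $\tau_i^{-1}\alpha^{-(i+1)}$ and the diffeomorphism iterates $\phi^{i+1}$, whose key numerical feature is that $\alpha^{i+1}\tau_i=\alpha(1-\alpha^{i+1})/(1-\alpha)\to\alpha/(1-\alpha)$, so both this factor and its reciprocal stay uniformly bounded above and away from zero in $i$. Condition (\ref{def1}) is already in the record: the splice curve $\gamma_i$ from $(p_0,0)$ to $(x_{i+1},\tau_{i+1})$ built in Section 3 has $\mathcal{L}$-energy dominated by a convergent geometric series in $\alpha^{(j+1)/2}$, and dividing by $2\sqrt{\tau_{i+1}}$ produces $l(x_i,\tau_i)\le C_2$ uniformly in $i$.

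For conditions (\ref{def2}) and (\ref{def3}), the core observation is that on each block $[\tau_j,\tau_{j+1}]$ the defining formula reads $g(\tau)=\alpha^{-(j+1)}(\phi^{j+1})^*g_0(\alpha^{j+1}(\tau-\tau_j))$, so at $s=\alpha^{j+1}(\tau-\tau_j)$ the curvature obeys $|Rm_{g(\tau)}|_{g(\tau)}(x)=\alpha^{j+1}|Rm_{g_0(s)}|_{g_0(s)}(\phi^{j+1}x)$ and the Ricci tensor is $(\phi^{j+1})^*Ric_{g_0(s)}$. By Lemma \ref{smooth}, $g_0$ is smooth up to the endpoints, hence has uniformly bounded curvature on every fixed compact spatial set. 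Combining this with the ball identification $\phi^{i+1}(B_{g(\tau_i)}(x_i,\sqrt{\tau_i}\,r))=B_{g_0(0)}(p_0,\sqrt{\alpha^{i+1}\tau_i}\,r)$ and the fact that only a bounded number (at most $\lceil\log_{1/\alpha}2\rceil+1$) of consecutive blocks can meet $[\tau_i,2\tau_i]$ yields the bound $|Rm_{g(\tau)}|\le C(r)/\tau_i$ required for (\ref{def2}); concatenating the block-wise estimates across the interval $[\tau_i,2\tau_i]$---needed when $\alpha$ is close to $1$ and $\tau_{i+1}/\tau_i<2$---is the most fiddly point. The hypothesis $Ric_{g_0}\ge-\Lambda g_0$ pulls back to $Ric_{g(\tau)}\ge-\Lambda\alpha^j g(\tau)\ge-\Lambda g(\tau)$ since $\alpha^j\le 1$, so (\ref{def3}) holds with the constant function $K\equiv\Lambda$.

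For condition (\ref{def4}), $\phi^{i+1}$ is an isometry from $\big(M,(\phi^{i+1})^*g_0(0),x_i\big)$ onto $\big(M,g_0(0),p_0\big)$, so injectivity-radius scaling gives $\mathrm{inj}(g(\tau_i),x_i)=\alpha^{-(i+1)/2}\mathrm{inj}(g_0(0),p_0)$, hence $\tau_i^{-1/2}\mathrm{inj}(g(\tau_i),x_i)\to\sqrt{(1-\alpha)/\alpha}\cdot\mathrm{inj}(g_0(0),p_0)>0$ by the smoothness of $g_0(0)$. Finally, the ``furthermore'' statement is direct substitution into (\ref{defined_ancient_solution}): for $\tau\in[1,\tau_{i+1}/\tau_i]$ one reads off $\tau_i^{-1}g(\tau\tau_i)=\tau_i^{-1}\alpha^{-(i+1)}(\phi^{i+1})^*g_0(\alpha^{i+1}\tau_i(\tau-1))$, which is exactly the breather $g_0$ pulled back by $\phi^{i+1}$---equivalently, identified with the original via the base-point-preserving diffeomorphism $\phi^{-(i+1)}$ sending $p_0$ to $x_i$---and rescaled by the single parabolic factor $\tau_i^{-1}\alpha^{-(i+1)}$; the time reparametrization $\tau\mapsto\alpha^{i+1}\tau_i(\tau-1)+1$ implicit in that rescaling maps $[1,\tau_{i+1}/\tau_i]$ onto $[1,2]$ because $\alpha^{i+1}(\tau_{i+1}-\tau_i)=1$.
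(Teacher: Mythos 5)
Your proposal is correct and follows essentially the same route the paper itself sketches (the paper states this proposition without a formal proof, deferring to the block-wise scaling identity at the start of Section~4 and the $\mathcal{L}$-energy estimate at the end of Section~3, with the multi-block issue for (\ref{def2}) relegated to the remark following the proposition). One small discrepancy worth noting: the paper asserts $\tau_i^{-1}\alpha^{-(i+1)}\to 1$, but your computation $\alpha^{i+1}\tau_i\to\alpha/(1-\alpha)$ is the correct one, giving $\tau_i^{-1}\alpha^{-(i+1)}\to(1-\alpha)/\alpha$; the uniform two-sided bound (which is all that is actually used) holds regardless.
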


\emph{Remark}: Though the ratios $\displaystyle\frac{\tau_{i+1}}{\tau_{i}}$ are not necessarily equal to $2$, and item (2) of Definition \ref{def_typei} requires locally uniformly bounded curvature on interval $[\tau_i,2\tau_i]$, yet one may verify from  (\ref{defined_ancient_solution}) that in the special case of an ancient solution constructed from a shrinking breather, (\ref{def2}) is indeed satisfied. Or one may simply take $\alpha$ to be, say, $\frac{1}{4}$, for simplicity.

	\section{Estimates on the $l$-distance}
	
In this section and the next, we proceed to prove that a locally uniformly Type I ancient solution always has an asymptotic shrinking gradient Ricci soliton. According to Perelman's \cite{P1} idea of asymptotic shrinking gradient Ricci soliton, we aim at showing that the scaled sequence of backward Ricci flows,
\begin{eqnarray*}
\Big\{\big(M,g_i(\tau),x_i,l_i\big)_{\tau\in[1,2]}\Big\}_{i=1}^\infty,
\end{eqnarray*}
where $g_i(\tau)=\tau_i^{-1}g(\tau\tau_i)$, $l_i(\tau)=l(\tau\tau_i)$, and all other notations are as in the definition in previous section, after passing to a subsequence, converges to a shrinking gradient Ricci soliton. To this end, we need not only the locally uniform geometric bound for the scaled sequence (\ref{ancient_sequence}), but also the locally uniform bound for the $l$-function and its gradient, as in \cite{P1} and \cite{N}; this is the goal of this section.

In our case, we have neither nonnegative curvature operator nor type-I bound. These two assumptions are made by Perelman and Naber to deal with Hamilton's trace Harnack, respectively. Nevertheless, the bound of $l$-function at $(x_i,\tau_i)$ and the locally uniform Type I curvature bound turn out to be sufficient. For the locally uniform estimate on $l$, the idea of triangle inequality is very useful: one may use as a test curve the $\mathcal{L}$-geodesic from the base point to $(x_i,\tau_i)$ concatenated with another curve from $(x_i,\tau_i)$ to the target point, where the bound of the former is a consequence of section 3, and the latter can be estimated using the local bound on curvature. For the gradient estimate of $l$, we use the following idea: Let $(x,\tau)$ be a point not too far away from $(x_i,\tau_i)$, and let $\gamma$ be the minimal $\mathcal{L}$-geodesic from $(p_0,0)$ to $(x,\tau)$. If $\gamma$ runs too fast as it approaches $(x,\tau)$, then the latter must be quite far away from $(x_i,\tau_i)$, contradicting our assumption on $(x,\tau)$. In combination with (\ref{grad_ll}) we obtain the gradient estimate.

Let $\displaystyle (M,g(\tau))_{\tau\in[0,\infty)}$, $p_0\in M$, and $\displaystyle \{(x_i,\tau_i)\}_{i=1}^\infty \subset M\times(0,\infty)$ be as in the Definition \ref{def_typei}. Let us consider the following scaled backward Ricci flows
\begin{eqnarray}\label{bound1}
\Big(M,g_i(\tau),(x_i,1)\Big)_{\tau\in[0,\infty)},
\end{eqnarray}
where $g_i(\tau)=\tau_i^{-1}g(\tau\tau_i)$. Then, let
\begin{eqnarray}\label{l}
l_i(\tau):=l(\tau\tau_i) \text{ for } \tau\in(0,\infty)
 \end{eqnarray}
where $l$ is the reduced distance from the base point $(p_0,0)$ with respect to the ancient flow $g(\tau)$. Indeed, $l_i$ is the reduced distance from $(p_0,0)$ with respect to the flow $g_i(\tau)$.

Next, we interpret (\ref{def1})---(\ref{def4}) in terms of the scaled flows (\ref{bound1}). First of all, (\ref{def1}) becomes
 \begin{eqnarray}\label{l-bound}
 l_i(x_i,1)=l(x_i,\tau_i)\leq C\text{ for all }i\geq 0,
 \end{eqnarray}
  where $C$ is a constant independent of $i$.

The locally uniform curvature bound (\ref{def2}) can be interpreted as follows. There exists a positive function $C_0:(0,\infty)\rightarrow(0,\infty)$, such that the following holds: for all $r>0$,
\begin{eqnarray}\label{bound2}
|Rm_{g_i(\tau)}|\leq C_0(r) \text{ on } B_{g_i(1)}(x_i,r)\times[1,2] \text{ for all } i \text{ large enough.}
\end{eqnarray}

The time-wise Ricci curvature lower bound (\ref{def3}) now becomes the following. There exists a positive sequence $\{K_i\}_{i=1}^\infty$, such that
\begin{eqnarray}\label{bound3}
Ric_{g_i(\tau)}\geq -K_ig_i(\tau) \text{ for all } i\text{ and for all } \tau\in[1,2]\text.
\end{eqnarray}

The injectivity radii condition (\ref{def4}) becomes
\begin{eqnarray}\label{bound_a}
\liminf_{i\rightarrow\infty} \Big(\text{inj}(g_i(1),x_i)\Big)>0.
\end{eqnarray}

Finally, by using (\ref{bound2}), we may apply a straightforward distance distortion estimate to obtain the following. For all $r>0$,
\begin{eqnarray}\label{bound4}
B_{g_i(\tau)}(x_i,r)&\subset& B_{g_i(1)}\big(x_i,C_0(r)\big),\\
B_{g_i(1)}(x_i,r)&\subset& B_{g_i(\tau)}\big(x_i,C_0(r)\big),\label{bound5}
\end{eqnarray}
for all $\tau\in[1,2]$ and for all $i$ large enough, where $C_0(r)$ is a positive function of $r$. The function $C_0$ in (\ref{bound4}), (\ref{bound5}), and (\ref{bound2}) are not necessarily the same. Nevertheless, we may always use the same notation for the sake of convenience.

The above setting is what we work with in the current section. Indeed, (\ref{bound3}) and (\ref{bound_a}) are not needed until the next section. The following is the main result of this section

	\begin{prop}\label{l-estiamtes}
		Consider the sequence of backward Ricci flows in (\ref{bound1}) with locally uniformly curvature bound (\ref{bound2}). Let $l_i$ be the functions defined in (\ref{l}), satisfying (\ref{l-bound}). Then the following hold. For any $\displaystyle\varepsilon\in\left(0,\frac{1}{4}\right)$ there exists a positive function $C(\epsilon,\cdot):(0,\infty)\rightarrow(0,\infty)$, such that for any $r>0$ it holds that
		\begin{enumerate}
		\item $\displaystyle 0\leq l_i(x,\tau)\leq C(\epsilon,r)$ for $(x,\tau)\in B_{g_i(1)}(x_i,r)\times [1+\varepsilon,2]$ for all $i$ large enough
		\vspace{0.05in}
		\item $\displaystyle\left|\frac{\partial l_i}{\partial \tau}(x,\tau)\right|+|\nabla l_i(x,\tau)| \leq C(\epsilon,r)$ for  $(x,\tau)\in \big(B_{g_i(1)}(x_i,r) \times [1+\varepsilon,2-\varepsilon]\big)\cap\big( \cup_{\tau\in[1+\varepsilon,2-\varepsilon]}\Omega(\tau)\times\{\tau\}\big)$ and for all $i$ large enough.
\end{enumerate}
Here $\Omega(\tau)$ is as defined in section 2.
	\end{prop}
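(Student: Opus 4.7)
The plan is to establish the three components of the estimate---the bound on $l_i$ itself, on $|\nabla l_i|$, and on $|\partial_\tau l_i|$---using respectively a triangle-type test curve, a Gronwall-type argument along the minimal $\mathcal{L}$-geodesic, and the pointwise identity (\ref{eq_l_1}).

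For the bound on $l_i$ in part (1), fix $(x,\tau) \in B_{g_i(1)}(x_i,r)\times[1+\varepsilon,2]$ and, following the idea suggested in the introductory text of this section, take $\eta_1$ to be a minimal $\mathcal{L}$-geodesic from $(p_0,0)$ to $(x_i,1)$ with respect to $g_i$; its $\mathcal{L}$-length is $2\,l_i(x_i,1)$, uniformly bounded by (\ref{l-bound}). Concatenate $\eta_1$ with a test curve $\eta_2$ defined on $[1,\tau]$ by reparametrizing a minimal $g_i(1)$-geodesic from $x_i$ to $x$ at constant $g_i(1)$-speed at most $r/\varepsilon$; this geodesic stays inside $B_{g_i(1)}(x_i,r)$, and the local curvature bound (\ref{bound2}) gives both a pointwise bound on $R$ and the distance-distortion comparison between $g_i(s)$ and $g_i(1)$, so that the $\mathcal{L}$-length of $\eta_2$ is bounded by a function of $\varepsilon$ and $r$ alone. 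Dividing the total $\mathcal{L}$-length by $2\sqrt{\tau}$ yields $l_i(x,\tau)\leq C(\varepsilon,r)$. The lower bound $l_i\geq 0$ is automatic: by Chen's theorem (Corollary \ref{positive_curvature}) an ancient Ricci flow has $R\geq 0$, so the $\mathcal{L}$-integrand is pointwise nonnegative.

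For the gradient bound in part (2), fix $(x,\tau)\in\Omega(\tau)\cap\big(B_{g_i(1)}(x_i,r)\times[1+\varepsilon,2-\varepsilon]\big)$ and let $\gamma$ be the unique minimal $\mathcal{L}$-geodesic from $(p_0,0)$ to $(x,\tau)$. By Lemma \ref{grad_ll}, $|\nabla l_i(x,\tau)|=|\gamma'(\tau)|=:M$. The strategy is the one the introduction hints at: if $M$ is large, then $\gamma$ ``runs too fast'' as $s\nearrow\tau$, producing an $\mathcal{L}$-energy that violates the already-established bound on $L_i$. Two ingredients are key. First, a spatial confinement for $\gamma$ near $\tau$: from $\int_0^\tau \sqrt{s'}\,|\gamma'|^2\,ds' \leq L_i(x,\tau)\leq C(\varepsilon,r)$ and Cauchy--Schwarz (together with the distance-distortion afforded by (\ref{bound2})), one obtains
\[
d_{g_i(s)}(\gamma(s),x) \;\leq\; C_1\sqrt{\tau-s},
\]
so that on some interval $[\tau-\Delta_0,\tau]$ with $\Delta_0=\Delta_0(\varepsilon,r)>0$ the curve $\gamma$ stays in $B_{g_i(1)}(x_i,2r)$, where by (\ref{bound2}) and Shi's estimates $|\mathrm{Ric}|$ and $|\nabla R|$ are bounded uniformly in $i$. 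Second, along $\gamma$ the $\mathcal{L}$-geodesic equation (\ref{l-geodesic}) yields
\[
\frac{d}{ds}|\gamma'|^2 \;=\; -2\,\mathrm{Ric}(\gamma',\gamma') \;+\; \langle\nabla R,\gamma'\rangle \;-\; \frac{|\gamma'|^2}{s},
\]
which on the confined region satisfies the Gronwall-type bound $\bigl|\tfrac{d}{ds}|\gamma'|^2\bigr|\leq A\,|\gamma'|^2+B$ with constants $A,B$ depending only on $\varepsilon,r$. Consequently, once $M$ is sufficiently large, there is some $\Delta_1=\Delta_1(\varepsilon,r)>0$ independent of $M$ for which $|\gamma'(s)|^2\geq M^2/2$ on $[\tau-\Delta_1,\tau]$. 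With $\Delta:=\min(\Delta_0,\Delta_1)$,
\[
\int_{\tau-\Delta}^{\tau}\sqrt{s}\,|\gamma'(s)|^2\,ds \;\geq\; c(\varepsilon,r)\,M^2,
\]
while the left side is at most $L_i(x,\tau)\leq C(\varepsilon,r)$, forcing $M\leq C(\varepsilon,r)$.

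Finally, for the $\tau$-derivative in part (2), equation (\ref{eq_l_1}) gives $2\,\partial_\tau l_i=R-|\nabla l_i|^2-l_i/\tau$, and every right-hand term has now been bounded on the relevant region, so $|\partial_\tau l_i|\leq C(\varepsilon,r)$. The main obstacle throughout is the absence of a trace-Harnack-type pointwise inequality for $R+|\gamma'|^2$ (which Perelman and Naber had under nonnegative curvature operator and a global Type I bound, respectively); here only a local-in-spacetime curvature bound is available, so the spatial confinement of $\gamma$ and the ODE coercivity for $|\gamma'|^2$ must be arranged to hold on a common time interval whose length does not degenerate as $M\to\infty$, which is precisely the role played by $\Delta=\min(\Delta_0,\Delta_1)$ above.
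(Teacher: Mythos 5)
Your proposal is correct and follows essentially the same route as the paper: part (1) by concatenating a bounded-$\mathcal{L}$ curve to $(x_i,1)$ with a short geodesic controlled by the local curvature bound, and part (2) by confining the minimal $\mathcal{L}$-geodesic to a fixed ball for a definite backward time (the paper phrases this as an exit-time claim proved by contradiction, which is the rigorous form of your distance inequality $d\leq C_1\sqrt{\tau-s}$, valid only up to the first exit from the region where the curvature bound applies) and then running a Gronwall argument on $|\gamma'|^2$ via the $\mathcal{L}$-geodesic equation, with $|\partial_\tau l_i|$ recovered from (\ref{eq_l_1}). The only cosmetic difference is that the paper carries out the ODE estimate in the variable $\sigma=\sqrt{s}$, which changes nothing since $\tau\in[1+\varepsilon,2-\varepsilon]$.
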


	\begin{proof}
In the following proof of both (1) and (2), we assume that $\displaystyle\varepsilon\in\left(0,\frac{1}{4}\right)$ is a fixed number.	 Furthermore, whenever we fix an $r>0$, we always assume $i$ is taken large enough so that the formulas (\ref{bound2}), (\ref{bound4}), and (\ref{bound5}) can be applied.

(1) The positivity of $l$-function follows from the positivity of the scalar curvature on ancient solutions; see \cite{C}.

Fix a $\tau\in[1+\varepsilon,2]$ and an $r>0$. By (\ref{l-bound}) we have that there exists a curve $\alpha_i:[0,1]\to M$ from $(p_0,0)$ to $(x_i,1)$
		such that
		\begin{equation}
		\mathcal{L}_i(\alpha_i)\le C.
		\end{equation}
		Let $\zeta_i(s):[0,\tau-1]$ be the minimal geodesic from $x_i$ to $x$ with respect to $g_i(\tau)$, where $\displaystyle x\in B_{g_i(\tau)}(x_i,r)\subset B_{g_i(1)}\big(x_i,C_0(r)\big)$; the latter inclusion is by (\ref{bound4}).
Set
		\begin{equation}
		\eta_i(s)=\left\{
		\begin{array}{ll}
		\alpha_i(s), \quad &s\in [0,1], \\
		\zeta_i(s-1), \quad &s\in (1,\tau].
		\end{array}
		\right.
		\end{equation}
		By (\ref{bound2}), we have $\displaystyle|\eta'_i(s)|^2_{g_i(s)}\leq e^{2nC_1(r)\tau} |\zeta'_i(s-1)|^2_{g_i(\tau)}$ for $s\in (1,\tau]$. This is because
\begin{eqnarray*}
&\zeta_i\subset B_{g_i(\tau)}(x_i,r)\subset B_{g_i(1)}\big(x_i,C_0(r)\big),&
\\
&\max_{{\zeta_i}\times[1,2]}|Rm_{g_i}|\leq C_1(r),&
\end{eqnarray*}
 where $C_1(r)$ is a constant depending on $r$. Hence
		\begin{align*}
		&L_i(x,\tau)\\
		\leq & \mathcal{L}_i(\eta_i(s))\\
		=& \mathcal{L}_i(\alpha_i)
		+\int^{\tau}_{1} \sqrt{s}\Big(R_{g_i(s)}(\eta_i(s))+|\eta_i'(s)|^2_{g_i(s)}\Big)ds\\
		\le& \mathcal{L}_i(\alpha_i)
		+\int^{\tau}_{1} \sqrt{s}\Big(R_{g_i(s)}(\zeta_i(s-1))+e^{2nC_1(r)\tau}|\zeta'_i(s-1)|^2_{g_i(\tau)}\Big)ds\\
		\leq &\mathcal{L}_i(\alpha_i)+2 C_1(r)+\frac{2}{3}e^{2nC_1(r)\tau}\frac{r^2}{(\tau-1)^2}(\tau^{\frac{3}{2}}-1)\\
        \leq&2C+2C_1(r)+4e^{2nC_1(r)\tau}\varepsilon^{-2}.
		\end{align*}
Taking (\ref{bound5}) into account, this finishes the proof of part (1).

		(2)  Let $\gamma_i$ be the minimal $\mathcal{L}-$geodesic from $(p_0,0)$ to $(x,\tau)$ with respect to the backward Ricci flow $g_i$, where  $x\in (B_{g_i(\tau)}(x_i,r)\cap \Omega(\tau)) $ and $\tau\in[1+\varepsilon,2-\varepsilon]$.
		Since
$$B_{g_i(\tau)}(x,r)\subset B_{g_i(\tau)}(x_i,2r)\subset B_{g_i(1)}\big(x_i,C_0(2r)\big),$$
 we have by (\ref{bound2}) that
		\begin{eqnarray}\label{curv}
\max_{ B_{g_i(\tau)}(x,r)\times [1,2]}|Rm_{g_i}|\leq C_2(r).
\end{eqnarray}
 Let $q_i=\gamma_i(\tau_{q_i})$ be defined as
\begin{eqnarray*}
\tau_{q_i}=\inf\Big\{s\in[0,\tau]\ \Big\vert\ \gamma_i|_{[s,\tau]}\subset B_{g_i(\tau)}(x,r)\Big\}.
\end{eqnarray*}

\noindent \underline{Claim}: There exists a positive number $c_1$ depending on $r$ and $\varepsilon$, but independent of $i$, such that
		$\sqrt{\tau}-\sqrt{\tau_{q_i}}\geq c_1$ for all $i$ large enough.
\begin{proof}[Proof of the claim]
We argue by contradiction. Let $x_i'\in B_{g_i(\tau)}(x_i,r)$ be a sequence of counterexamples. In other words, let $\gamma_i$ be minimal $\mathcal{L}$-geodesic from $(p_0,0)$ to $(x_i',\tau)$ with respect to the flow $g_i$ and
 \begin{eqnarray*}
\tau_{q_i}=\inf\Big\{s\in[0,\tau]\ \Big\vert\ \gamma_i|_{[s,\tau]}\subset B_{g_i(\tau)}(x_i',r)\Big\}.
\end{eqnarray*}
But $\sqrt{\tau}-\sqrt{\tau_{q_i}}\to 0$ as $i\to \infty$.  We can then assume that $\tau_{q_i}\geq 1$ when $i$ is large. As before, since
$$\gamma_i|_{[\tau_{q_i},\tau]}\subset B_{g_i(\tau)}(x_i',r)\subset B_{g_i(\tau)}(x_i,2r)\subset B_{g_i(1)}\big(x_i,C_0(2r)\big),$$
we have
\begin{eqnarray}\label{curv_1}
\max_{ \gamma_i |_{[\tau_{q_i},\tau]}\times[1,2]}|Rm_{g_i}|\leq\max_{ B_{g_i(\tau)}(x_i',r)\times [1,2]}|Rm_{g_i}|\leq C_2(r).
\end{eqnarray}
		
Let us change the variable so that $\beta_i(\sigma)=\gamma_i(\sigma^2)$. We can then write the $\mathcal{L}$-energy of $\gamma_i$ as
		$$\mathcal{L}_i(\gamma_i)=\int^{\sqrt{\tau}}_0 \left(2\sigma^2 R_{g_i(\sigma^2)}(\beta_i(\sigma))+\frac{1}{2}|\beta_i'(\sigma)|^2_{g_i(\sigma^2)}\right)d\sigma.$$
Then we have
		\begin{align*}
		\int^{\sqrt{\tau}}_{\sqrt{\tau_{q_i}}} \frac{1}{2}|\beta_i'(\sigma)|^2_{g_i(\sigma^2)}d\sigma&=\mathcal{L}_i(\gamma_i|_{[\tau_{q_i},\tau]})-
		\int^{\sqrt{\tau}}_{\sqrt{\tau_{q_i}}} 2\sigma^2 R_{g_i(\sigma^2)}(\beta_i(\sigma))d\sigma\\
		&\le\mathcal{L}_i(\gamma_i)=2\sqrt{\tau}l_i(x_i',\tau)\\
		&\le
		C_3(r),
		\end{align*}
where we have also used part (1) as well as the positivity of scalar curvature on ancient Ricci flows. It follows from the definition of $q_i$ and (\ref{curv_1}) that
		\begin{align*}
		r^2=d^2_{g_i(\tau)}(x_i',q_i)&\le\left(\int^{\sqrt{\tau}}_{\sqrt{\tau_{q_i}}} |\beta_i'(\sigma)|_{g_i(\tau)}d\sigma\right)^2\\
&\le e^{2n C_2(r)\tau}\left(\int^{\sqrt{\tau}}_{\sqrt{\tau_{q_i}}} |\beta_i'(\sigma)|_{g_i(\sigma^2)}d\sigma\right)^2\\
		&\le (\sqrt{\tau}-\sqrt{\tau_{q_i}})e^{2n C_2(r)\tau}\int^{\sqrt{\tau}}_{\sqrt{\tau_{q_i}}} |\beta_i'(\sigma)|^2_{g_i(\sigma^2)}d\sigma\\
		&\le C_4(r)(\sqrt{\tau}-\sqrt{\tau_{q_i}});
		\end{align*}
a contradiction against $\sqrt{\tau}-\sqrt{\tau_{q_i}}\to 0$. This proves the claim.
\end{proof}
		
		Let us continue working with the minimal $\mathcal{L}$-geodesic connecting $(p_0,0)$ and $(x,\tau)$, where $x\in B_{g_i(\tau)}(x_i,r)$. After the change of the variable $\beta_i(\sigma)=\gamma_i(\sigma^2)$, the $\mathcal{L}-$geodesic equation (\ref{l-geodesic}) becomes
		\begin{equation*}
		\nabla_{\beta_i'(\sigma)} \beta_i'(\sigma)-2\sigma^2\nabla R+4\sigma Rc(\beta_i'(\sigma))=0,
		\end{equation*}
		So we have
		\begin{eqnarray*}
		\frac{d}{d\sigma}|\beta_i'(\sigma)|_{g_i(\sigma^2)}^2&=&4\sigma^2 \nabla R\cdot \beta_i'-4\sigma Ric_{g_i}(\beta_i',\beta_i')
\\
&\leq&C_5(r)\left(|\beta_i'(\sigma)|_{g_i(\sigma^2)}+|\beta_i'(\sigma)|_{g_i(\sigma^2)}^2\right)
\\
&\leq&C_5(r)\left(1+2|\beta_i'(\sigma)|_{g_i(\sigma^2)}^2\right),
		\end{eqnarray*}
for all $\displaystyle\sigma\in\left[\sqrt{\tau_{q_i}'},\sqrt{\tau}\right]$, where $\tau_{q_i}'=\max\{\tau_{q_i},1\}$; we have made use of the fact $\displaystyle\beta_i|_{[\sqrt{\tau_{q_i}'},\sqrt{\tau}]}\subset B_{g_i(\tau)}(x,r)$, (\ref{curv}), and Shi's local gradient estimates \cite{Sh}. Indeed, (\ref{curv}) is still true if we replace $B_{g_i(\tau)}(x,r)$ by $B_{g_i(\tau)}(x,2r)$, with possibly a different $C_2$. Therefore Shi's estimates can be implemented. Integrating the above inequality, we obtain
		\begin{eqnarray}\label{beta}
|\beta_i'(\sigma)|^2_{g_i(\sigma^2)}\ge c_6(r)|\beta_i'(\sqrt{\tau})|^2_{g_i(\tau)}-C_6(r)
		\end{eqnarray}
		for $\sigma\in[\sqrt{\tau_{q_i}'},\sqrt{\tau}]$ .
		Then, by part (1), the claim, and (\ref{beta}), we have
		\begin{align*}
		C(r)\ge\mathcal{L}_i(\gamma_i|_{[\tau_q',\tau]})&=\int^{\sqrt{\tau}}_{\sqrt{\tau_q'}} \frac{1}{2}|\beta_i'(\sigma)|^2_{g_i(\sigma^2)}d\sigma+
		\int^{\sqrt{\tau}}_{\sqrt{\tau_q'}} 2\sigma^2 R_{g_i}(\beta_i(\sigma),\sigma^2)d\sigma\\
		&\ge c_7(r)|\beta_i'(\sqrt{\tau})|_{g_i(\tau)}^2-C_7(r)
		\end{align*}
		Using Lemma \ref{grad_ll}, we obtain
		\begin{align*}
		|\nabla l_i(x,\tau)|=|\gamma_i'(\tau)|=\frac{|\beta_i'(\sqrt{\tau})|}{2\sqrt{\tau}}\le C_8(r).
		\end{align*}
Finally, by (\ref{eq_l_1}), (\ref{bound2}), and part (1), we obtain
		\begin{align*}
		\left|\frac{\partial l_i}{\partial \tau}(x,\tau)\right|\le C_9(r);
		\end{align*}
this completes the proof of the theorem.
	\end{proof}
	
	\section{Asymptotic Shrinking Gradient Ricci Soliton}

After the preparation in the previous section, we are ready to show that a locally uniformly Type I ancient solution has an asymptotic shrinking gradient Ricci soliton.

\begin{thm}\label{asymptotic shrinker}
Let $(M,g(\tau))_{\tau\in[0,\infty)}$ be an ancient solution to the Ricci flow, where $\tau$ is the backward time. Let $p_0\in M$ and $\{(x_i,\tau_i)\}_{i=1}^\infty$ be a space-time sequence such that $\tau_i\nearrow\infty$. Assume that $(M,g(\tau))_{\tau\in[0,\infty)}$ is locally uniformly Type I along $\{(x_i,\tau_i)\}_{i=1}^\infty$. Then the sequence of tuples
\begin{eqnarray}\label{sequenceoftuples}
\Big\{\big(M,g_i(\tau),x_i,l_i\big)_{\tau\in[1,2]}\Big\}_{i=1}^\infty,
\end{eqnarray}
where $g_i(\tau)=\tau_i^{-1}g(\tau\tau_i)$, $l_i(\tau)=l(\tau\tau_i)$, and $l$ is the reduced distance function based at $(p_0,0)$, after passing to a subsequence, converges to the canonical form of a shrinking gradient Ricci soliton
\begin{eqnarray*}
\big(M_\infty,g_\infty(\tau),x_\infty,l_\infty\big)_{\tau\in(1,2)}
\end{eqnarray*}
with $l_\infty$ being the potential function, that is,
\begin{eqnarray*}
Ric_{g_\infty(\tau)}+\nabla^2l_\infty-\frac{1}{2\tau}g_{\infty}(\tau)=0,
\end{eqnarray*}
for all $\tau\in(1,2)$. Here the Ricci flows $\big(M,g_i(\tau),x_i\big)_{\tau\in[1,2]}$ converge in the smooth Cheeger-Gromov-Hamilton sense \cite{RH5}, and the functions $l_i$ converge in the $C_{\text{loc}}^{0,\alpha}$ sense as well as in the weak $*W_{\text{loc}}^{1,2}$ sense.
\end{thm}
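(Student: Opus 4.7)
My plan is to execute Perelman's blueprint for the existence of the asymptotic shrinker \cite{P1}, substituting Proposition \ref{l-estiamtes} and the distributional inequalities (\ref{eq_l_6})--(\ref{eq_l_7}) for the classical pointwise estimates. First, conditions (\ref{def2}) and (\ref{def4}) in Definition \ref{def_typei} translate, after the rescaling $g_i(\tau)=\tau_i^{-1}g(\tau\tau_i)$, to a locally uniform curvature bound on $[1,2]$ around $x_i$ together with a uniform positive lower bound on the injectivity radius at $(x_i,1)$. Hamilton's compactness theorem \cite{RH5} then extracts a subsequence converging in the pointed smooth Cheeger--Gromov--Hamilton sense to a limit Ricci flow $(M_\infty, g_\infty(\tau), x_\infty)_{\tau\in[1,2]}$.

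To extract a limit for the reduced distance, I would combine Proposition \ref{l-estiamtes}---which gives locally uniform bounds on $l_i$ and on $|\nabla l_i|,|\partial_\tau l_i|$ on the regular set---with Theorem \ref{Y} (local Lipschitz continuity of $L$ and zero measure of the cut locus) to obtain a global locally Lipschitz bound. Arzela--Ascoli together with weak sequential compactness then produce a subsequential limit $l_\infty \in C^{0,\alpha}_{\text{loc}}\cap W^{1,2}_{\text{loc}}(M_\infty\times(1,2))$, identified via the Cheeger--Gromov--Hamilton diffeomorphisms.

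The crucial input is the stabilization of the reduced volume. By the scaling invariance of $\mathcal{V}$ we have $\mathcal{V}_{g_i}(\tau) = \mathcal{V}_{g}(\tau\tau_i)$, and Theorem \ref{Monotonicity} says the right-hand side is monotone and bounded in $[0,1]$; hence $\lim_i \mathcal{V}_{g_i}(\tau) = \bar{\mathcal{V}}$ for every $\tau\in[1,2]$. Writing Perelman's monotonicity formula in integrated form, the nonpositive Harnack-type integrand in (\ref{eq_l_5}) must therefore vanish in the limit. Testing the distributional inequality (\ref{eq_l_7}) against sequences $\psi_i$ that approximate a compactly supported Lipschitz $\psi$ on $M_\infty\times(1,2)$ via the convergence diffeomorphisms, I would conclude that
$$2\Delta l_\infty - |\nabla l_\infty|^2 + R_{g_\infty} + \frac{l_\infty - n}{\tau} = 0$$
weakly on $M_\infty\times(1,2)$. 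This is a quasilinear elliptic equation in $l_\infty$ with Lipschitz data, so De Giorgi--Nash--Moser bootstrapping makes $l_\infty$ smooth. Combining with the distributional limit of (\ref{eq_l_1}) (equivalently, equality in (\ref{eq_l_6})) and invoking Perelman's identity $\square^*v = -2\tau|Ric+\nabla^2 l - \tfrac{1}{2\tau}g|^2 u$ applied to $v_\infty \equiv 0$, one deduces the shrinker equation $Ric_{g_\infty(\tau)} + \nabla^2 l_\infty - \frac{1}{2\tau}g_\infty(\tau) = 0$; the Ricci flow equation for $g_\infty$ then identifies the limit as the canonical form.

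The main obstacle will be the passage to the limit in the nonlinear term $|\nabla l_i|^2$: weak $W^{1,2}_{\text{loc}}$ convergence alone is insufficient. The resolution, as in Perelman's original argument, is that the stabilization of $\mathcal{V}_{g_i}$ upgrades weak to strong $L^2_{\text{loc}}$ convergence of the gradients---the squared defect $\int|\nabla l_i - \nabla l_\infty|^2(4\pi\tau)^{-n/2}e^{-l_i}\,dV$ can be expressed, after an integration by parts using $\nabla u_i = -u_i\nabla l_i$, as something dominated by $\mathcal{V}_{g_i}(1) - \mathcal{V}_{g_i}(2)$ plus lower-order terms that vanish by the $C^{0,\alpha}_{\text{loc}}$ convergence. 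With this strong convergence in hand, passage to the limit in (\ref{eq_l_7}) is legitimate and the remainder of the argument is standard.
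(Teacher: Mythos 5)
Your overall strategy coincides with the paper's: Cheeger--Gromov--Hamilton compactness from (\ref{def2}) and (\ref{def4}), extraction of $l_\infty$ from Proposition \ref{l-estiamtes}, stabilization of the reduced volume, a squeeze turning a distributional inequality into an equality, parabolic regularity, and finally Perelman's identity $\square^*v=-2\tau|Ric+\nabla^2l-\frac{1}{2\tau}g|^2u$ with $v\equiv0$. However, two points need repair. First, your bookkeeping of the differential inequalities is off: the quantity controlled by $\frac{d}{d\tau}\mathcal{V}_i$ is $\int\square^*u_i$, i.e.\ the content of (\ref{eq_l_4})/(\ref{eq_l_6}), not of (\ref{eq_l_5})/(\ref{eq_l_7}); the time-slice inequality (\ref{eq_l_7}) integrated against $u_i$ gives a $\mathcal{W}$-entropy--type quantity, and its vanishing does not follow directly from $\mathcal{V}_i(\tau_2)-\mathcal{V}_i(\tau_1)\to0$. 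The correct order (and the paper's) is: squeeze (\ref{eq_l_6}) to get the conjugate heat equation for $(4\pi\tau)^{-n/2}e^{-l_\infty}$, then combine with the a.e.\ Hamilton--Jacobi identity (\ref{eq_l_1}) (which is \emph{not} ``equality in (\ref{eq_l_6})'') to deduce equality in (\ref{eq_l_7}), i.e.\ $v\equiv0$. This reordering also defuses most of the $|\nabla l_i|^2$ difficulty you worry about: in (\ref{eq_l_6}) rewritten for $u_i$ the gradient enters linearly as $\langle\nabla\phi,\nabla u_i\rangle$, so weak $W^{1,2}_{\mathrm{loc}}$ convergence suffices there; the nonlinear term only appears when passing (\ref{eq_l_1}) to the limit, which the paper handles by the distributional convergence $|\nabla l_i|^2\to|\nabla l_\infty|^2$ (Lemma 9.21 in \cite{MT}, resting on the one-sided Laplacian bound (\ref{eq_l_5})), not by the reduced-volume-defect identity you sketch, which is not established.

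Second, and more substantively, your squeeze is stated as if $M$ were compact. On a noncompact manifold the inequality $\mathcal{V}_i(\tau_2)-\mathcal{V}_i(\tau_1)\le\int\!\!\int\bigl(\phi\,\partial_\tau u_i+\langle\nabla\phi,\nabla u_i\rangle+\phi R u_i\bigr)$ for a \emph{compactly supported} $\phi\le1$ requires showing that the contribution from outside $\mathrm{spt}\,\phi$ is nonpositive up to errors that vanish as the cut-off scale goes to infinity. This is exactly where hypothesis (3) of Definition \ref{def_typei} (the time-wise Ricci lower bound) enters: it yields time-independent cut-offs $\varphi_{A,i}$ with $|\nabla\varphi_{A,i}|\le C_{K_i}/A$ on all of $[1,2]$ (Lemma \ref{cutoff}), and one further needs the uniform weighted bound $\int_M|\nabla l_i|^2(4\pi\tau)^{-n/2}e^{-l_i}\,dg_i\le C_0$ (Lemma \ref{grad}, itself obtained by testing (\ref{eq_l_7}) with $\varphi_{A,i}^2u_i$) to kill the cross term $\int\langle\nabla\varphi_{A,i},\nabla u_i\rangle$. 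These two lemmas are the main technical content of the paper's Section 6 beyond Perelman's compact argument, and your proposal does not account for them; without them the lower bound in the squeeze does not close.
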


Let us consider the sequence of tuples (\ref{sequenceoftuples}). Because the locally uniform Type I assumption, (\ref{bound2})---(\ref{bound5}) are still valid for $g_i$ and $l_i$. Furthermore, Proposition \ref{l-estiamtes} also implies a locally uniform bound for $l_i$ as well as its gradient around $(x_i,1)$. This is the setting which we work with subsequently in this section.

The following convergence of underlying Ricci flows is an immediate consequence of (\ref{bound2}) and (\ref{bound_a}).

\begin{lem}
After passing to a subsequence, the sequence of backward Ricci flows
\begin{eqnarray*}
\Big\{\big(M,g_i(\tau),(x_i,1)\big)_{\tau\in[1,2]}\Big\}_{i=1}^\infty
\end{eqnarray*}
converges in the smooth Cheeger-Gromov-Hamilton sense \cite{RH5} to a smooth backward Ricci flow
\begin{eqnarray*}
\big(M_\infty,g_\infty(\tau),(x_\infty,1)\big)_{\tau\in[1,2)}.
\end{eqnarray*}
\end{lem}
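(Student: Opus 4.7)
The plan is to invoke Hamilton's Cheeger--Gromov compactness theorem for Ricci flows \cite{RH5}, whose hypotheses are (i) a locally uniform curvature bound on balls centered at the base points, and (ii) a uniform positive lower bound for the injectivity radius at the base points at one time. Both of these are essentially already packaged into Definition \ref{def_typei} and the rescaled form (\ref{bound2})--(\ref{bound_a}) derived in the previous section.

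First I would fix an arbitrary radius $A>0$. By (\ref{bound2}), for all sufficiently large $i$, one has $|Rm_{g_i(\tau)}|\leq C_0(A)$ on $B_{g_i(1)}(x_i,A)\times[1,2]$. A routine distance distortion estimate as in (\ref{bound4})--(\ref{bound5}) shows that these intrinsic $g_i(1)$-balls contain the corresponding $g_i(\tau)$-balls for $\tau\in[1,2]$ of a slightly smaller radius, and vice versa, so the curvature bound is locally uniform in both the spatial and temporal senses required by the compactness theorem. Next I would record the injectivity radius lower bound: (\ref{bound_a}) gives
\begin{equation*}
\liminf_{i\to\infty}\operatorname{inj}(g_i(1),x_i)>0.
\end{equation*}
These two pieces of information are exactly the input for Hamilton's compactness theorem in the pointed setting, so after passing to a subsequence we obtain a pointed smooth limit $(M_\infty,g_\infty(\tau),(x_\infty,1))$ of the sequence $(M,g_i(\tau),(x_i,1))$, with $(M_\infty,g_\infty(\tau))$ itself a smooth solution to the backward Ricci flow equation and the convergence taking place in the smooth Cheeger--Gromov--Hamilton sense on compact subsets of $M_\infty\times[1,2)$. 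The fact that the limit time interval is $[1,2)$ rather than the full $[1,2]$ is the standard artifact of Hamilton's theorem: the compactness statement yields smooth convergence on sets of the form $K\times J$ where $K\Subset M_\infty$ and $J\Subset[1,2]$ contains the base time slice, hence any proper subinterval containing $\tau=1$ is admissible.

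The only point that merits caution is completeness of the limit flow, which follows because one can take the radius $A$ above as large as we wish, so the limit manifold $M_\infty$ is exhausted by relatively compact sets on which the convergence is smooth, making $(M_\infty,g_\infty(1))$ complete; the Ricci flow equation then propagates completeness to all $\tau\in[1,2)$. There is no genuine obstacle in this lemma---the substantive analytic work was already done in establishing the locally uniform curvature and injectivity radius bounds that constitute Definition \ref{def_typei}; here one is simply harvesting these bounds through a standard compactness argument. The real difficulty of the paper, namely upgrading this bare geometric convergence to a convergence of the reduced distance functions $l_i$ and then identifying the limit as a gradient shrinking soliton, is deferred to the subsequent steps of the proof of Theorem \ref{asymptotic shrinker}.
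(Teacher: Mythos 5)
Your proposal is correct and follows exactly the route the paper takes: the paper states this lemma as an immediate consequence of the rescaled curvature bound (\ref{bound2}) and the injectivity radius bound (\ref{bound_a}), fed into Hamilton's compactness theorem. One small refinement: the precise reason the limit lives only on $[1,2)$ is that Shi's derivative estimates are one-sided in (forward) time and hence cannot be applied at $\tau=2$, which is the paper's stated explanation and is slightly sharper than attributing it to compact exhaustion of the time interval.
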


\emph{Remark}: It is not known whether $g_\infty$ is defined at $\tau=2$, because Shi's estimates \cite{Sh} cannot be applied at this instance. Nevertheless, in the case of shrinking breather, that is, if the ancient solution is defined as in (\ref{defined_ancient_solution}), $g_\infty$ exists and is smooth up to $\tau=2$.

In the definition of Cheeger-Gromov-Hamilton convergence, there are diffeomorphism involved:
\begin{eqnarray}\label{diffeo}
\Phi_i: M_\infty\times[1,2)\supset U_i\rightarrow V_i\subset M\times[1,2],
\end{eqnarray}
where $\{U_i\}_{i=1}^\infty$  is a sequence of open sets exhausting the space-time $M_\infty\times[1,2)$, with $(x_\infty,1)\in U_i$ and $\Phi_i(x_\infty,1)=(x_i,1)$ for all $i$ . The composite functions
\begin{eqnarray*}
l_i\circ\Phi_i
\end{eqnarray*}
are therefore functions on $M_\infty\times[1,2)$. In the following argument, when we are referring to $l_i$ as a function on $M_\infty\times[1,2)$, this composition is implicitly understood, though we still write it as $l_i$ for simplicity.

Let us then recall some basic properties of $l_i$. We define
\begin{eqnarray*}
\mathcal{V}_i(\tau):=\int_M(4\pi \tau)^{-\frac{n}{2}}e^{-l_i}dg_i(\tau), \ \tau\in[1,2].
\end{eqnarray*}
By the monotonicity property of reduced volume (Theorem \ref{Monotonicity}(2)), we have
\begin{eqnarray}\label{final mono}
\lim_{\i\rightarrow\infty}\mathcal{V}_i(\tau)=V_\infty \text{ uniformly for }\tau\in[1,2],
\end{eqnarray}
where $V_\infty\in[0,1]$ is a constant. We will see in section 7 that $V_\infty>0$, and $V_\infty=1$ implies that the Ricci flow is a static Euclidean space; see \cite{YT}. For the moment, we only need the fact that $V_\infty$ is a constant. Furthermore, $l_i$ also satisfies the formulas (\ref{eq_l_1})---(\ref{eq_l_7}).

\begin{lem}
After passing to a subsequence, $\{l_i\}_{i=1}^\infty$ converges in the $C_{\text{loc}}^{0,\alpha}\big(M_\infty\times(1,2)\big)$ sense as well as in the weak $*W_{\text{loc}}^{1,2}\big(M_\infty\times(1,2)\big)$ sense to a locally Lipschitz function $l_\infty$; here $\alpha\in(0,1)$ is an arbitrarily fixed number. By passing to a further subsequence, we may also assume that $\displaystyle(4\pi\tau)^{-\frac{n}{2}}e^{-l_i}\rightarrow(4\pi\tau)^{-\frac{n}{2}}e^{-l_\infty}$ in the same sense. In particular, $(4\pi\tau)^{-\frac{n}{2}}e^{-l_\infty}$ is a positive function.
\end{lem}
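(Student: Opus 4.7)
The plan is to establish uniform local Lipschitz bounds for $\{l_i \circ \Phi_i\}$ on neighborhoods of $(x_\infty, 1)$ in $M_\infty \times (1, 2)$, then extract convergent subsequences via Arzela-Ascoli and weak-$*$ compactness. The $C^{0,\alpha}_{\text{loc}}$ convergence will come from a uniform modulus of continuity, and the weak-$*$ $W^{1,2}_{\text{loc}}$ convergence from a uniform $W^{1,\infty}_{\text{loc}}$ bound, which is strictly stronger.

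First I would fix a compact subset $K \subset M_\infty \times (1, 2)$, a small $\varepsilon \in (0, 1/4)$, and a radius $r > 0$ such that $\Phi_i(K) \subset B_{g_i(1)}(x_i, r) \times [1 + \varepsilon, 2 - \varepsilon]$ for all $i$ large enough; such choices are possible because $\Phi_i^* g_i \to g_\infty$ smoothly on compacta with $\Phi_i(x_\infty, 1) = (x_i, 1)$. Proposition \ref{l-estiamtes}(1) then gives a uniform $C^0$ bound for $l_i$ on $\Phi_i(K)$, while Proposition \ref{l-estiamtes}(2) gives pointwise bounds on $|\nabla l_i|$ and $|\partial_\tau l_i|$ on the intersection of $\Phi_i(K)$ with the $\mathcal{L}$-regular set $\cup_{\tau} \Omega(\tau) \times \{\tau\}$.

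Next I need to upgrade these pointwise derivative bounds, valid only on an open dense subset, to a uniform space-time Lipschitz bound on all of $\Phi_i(K)$. The key ingredients are Theorem \ref{Y}(2), which tells us that $l_i$ is already locally Lipschitz in space-time, and Theorem \ref{Y}(3), which says that the cut locus $\cup_\tau C(\tau) \times \{\tau\}$ is closed and of measure zero. Since any two points in $\Phi_i(K)$ can be joined by short piecewise smooth curves meeting the cut locus only on a negligible parameter set, integrating the pointwise derivative bounds along such curves (and using continuity of $l_i$ to pass across the cut locus) yields a uniform Lipschitz constant with respect to $g_i$. Pulling back under $\Phi_i$ and using $\Phi_i^* g_i \to g_\infty$ in $C^\infty_{\text{loc}}$, I thereby obtain a uniform local Lipschitz bound for $l_i \circ \Phi_i$ on $K$ with respect to $g_\infty$.

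With these uniform bounds in hand, Arzela-Ascoli yields a subsequence converging in $C^{0,\alpha}_{\text{loc}}\bigl(M_\infty \times (1, 2)\bigr)$ to a locally Lipschitz limit $l_\infty$. The uniform Lipschitz bound also provides uniform boundedness in $W^{1,\infty}_{\text{loc}}$, so a further diagonal subsequence converges weak-$*$ in $W^{1,2}_{\text{loc}}$, and the $C^{0,\alpha}_{\text{loc}}$ convergence forces the weak-$*$ limit to coincide with $l_\infty$. The uniform convergence on compacta immediately transfers to $(4\pi\tau)^{-n/2} e^{-l_i} \to (4\pi\tau)^{-n/2} e^{-l_\infty}$ in the same two senses, and positivity of the latter follows from the local boundedness of $l_\infty$. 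The main obstacle I anticipate is precisely the upgrade step in the third paragraph: Proposition \ref{l-estiamtes}(2) only furnishes gradient control on the $\mathcal{L}$-regular set, so one must argue that this set is dense enough in a quantitatively uniform sense across $i$ to yield a genuine Lipschitz bound—this is where the measure-zero property of Theorem \ref{Y}(3) is essential.
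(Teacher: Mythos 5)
Your proposal is correct and follows the same route as the paper, which simply declares the lemma ``a direct consequence of Proposition \ref{l-estiamtes}''; you have filled in the standard compactness details (uniform $C^0$ and a.e.\ gradient bounds from that proposition, the upgrade to a genuine Lipschitz bound using the local Lipschitz property of $l$ and the measure-zero cut locus from Theorem \ref{Y}, then Arzel\`a--Ascoli and weak compactness). The upgrade step you flag as the main obstacle is indeed the only point requiring care, and your treatment of it is the standard and correct one.
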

\begin{proof}
This is a direct consequence of Proposition \ref{l-estiamtes}.
\end{proof}

Subsequently, our main goal for this section is to prove the following.

\begin{prop} \label{propheat}
$(4\pi\tau)^{-\frac{n}{2}}e^{-l_\infty}$  is a smooth solution to the conjugate heat equation $\displaystyle \frac{\partial}{\partial\tau}u-\Delta u+Ru=0$ on $M_\infty\times(1,2)$.
\end{prop}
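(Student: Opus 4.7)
The plan is to show that $u_\infty:=(4\pi\tau)^{-n/2}e^{-l_\infty}$ is a nonnegative distributional solution of $\partial_\tau u-\Delta u+Ru=0$ on $M_\infty\times(1,2)$, and then invoke linear parabolic regularity. The starting point is the pointwise-on-the-smooth-locus identity, for $u_i:=(4\pi\tau)^{-n/2}e^{-l_i}$,
\begin{align*}
(\partial_\tau-\Delta_{g_i}+R_{g_i})u_i=-u_i\Bigl(\partial_\tau l_i-\Delta l_i+|\nabla l_i|^2-R+\tfrac{n}{2\tau}\Bigr)\le 0.
\end{align*}
Testing (\ref{eq_l_6}) against $u_i\psi$ for nonnegative $\psi\in C_c^\infty(M\times(1,2))$ and rearranging via the Ricci-flow integration by parts $\int\psi\,\partial_\tau u\,dg=-\int u\,(\partial_\tau\psi+R\psi)\,dg$ yields the global distributional supersolution statement
\begin{align*}
\int_1^2\!\!\int_M u_i\bigl(\partial_\tau\psi+\Delta_{g_i}\psi\bigr)\,dg_i(\tau)\,d\tau\ge 0.
\end{align*}

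Next, pass to the limit. Given nonnegative $\phi\in C_c^\infty(M_\infty\times(1,2))$, pull it back via the Cheeger-Gromov-Hamilton diffeomorphisms $\Phi_i$ in (\ref{diffeo}) to $\tilde\phi_i=\phi\circ\Phi_i^{-1}$, defined for $i$ large. Since $l_i\to l_\infty$ in $C^{0,\alpha}_{\text{loc}}$ we have $u_i\to u_\infty$ locally uniformly, and since $g_i\to g_\infty$ smoothly on compact sets, $\partial_\tau\tilde\phi_i+\Delta_{g_i}\tilde\phi_i$ and $dg_i(\tau)$ converge uniformly on $\mathrm{supp}(\phi)$ to their $g_\infty$-analogues. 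The displayed inequality passes to the limit and gives
\begin{align*}
\int_1^2\!\!\int_{M_\infty} u_\infty\bigl(\partial_\tau\phi+\Delta_{g_\infty}\phi\bigr)\,dg_\infty(\tau)\,d\tau\ge 0,
\end{align*}
so $\mu:=-(\partial_\tau-\Delta_{g_\infty}+R_{g_\infty})u_\infty$ is a nonnegative Radon measure on $M_\infty\times(1,2)$.

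To upgrade to $\mu\equiv 0$, invoke (\ref{final mono}): $\mathcal V_i(\tau)\to V_\infty$ uniformly on $[1,2]$, so $\mathcal V_i(1)-\mathcal V_i(2)\to 0$. The integrated form of the reduced-volume monotonicity reads
\begin{align*}
\mathcal V_i(1)-\mathcal V_i(2)=\int_1^2\!\!\int_M\bigl(-(\partial_\tau-\Delta_{g_i}+R_{g_i})u_i\bigr)\,dg_i(\tau)\,d\tau,
\end{align*}
obtained by differentiating $\mathcal V_i(\tau)=\int_M u_i\,dg_i(\tau)$ together with the vanishing of the flux at infinity $\int_M\Delta u_i\,dg_i=0$, the latter being the content of Ye's cutoff argument underlying Theorem \ref{Monotonicity}. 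For any $0\le\phi\le 1$ with compact support in $M_\infty\times(1,2)$, the same convergence as in the previous step applied with $0\le\tilde\phi_i\le 1$ gives
\begin{align*}
\int\phi\,d\mu=\lim_{i\to\infty}\int_1^2\!\!\int_M\tilde\phi_i\bigl(-(\partial_\tau-\Delta_{g_i}+R_{g_i})u_i\bigr)\,dg_i\,d\tau\le\lim_{i\to\infty}\bigl(\mathcal V_i(1)-\mathcal V_i(2)\bigr)=0.
\end{align*}
Since $\mu\ge 0$, this forces $\mu\equiv 0$, i.e., $u_\infty$ solves $\partial_\tau u-\Delta u+Ru=0$ distributionally. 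Standard linear parabolic regularity with smooth coefficients (as $g_\infty$ is smooth) then upgrades the positive, locally Lipschitz $u_\infty$ to a smooth solution.

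The main obstacle is the flux identity $\int_M\Delta u_i\,dg_i=0$ used in the third step: on a complete noncompact Ricci flow this is not automatic and must be harvested from the exhaustion argument embedded in Ye's proof of the reduced-volume monotonicity under a Ricci lower bound, rather than from the bare monotonicity statement of Theorem \ref{Monotonicity}.
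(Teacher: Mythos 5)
Your overall strategy coincides with the paper's: derive from (\ref{eq_l_6}) that $u_i=(4\pi\tau)^{-n/2}e^{-l_i}$ is a distributional supersolution of the conjugate heat equation (this is the paper's (\ref{modifieddistribution})), squeeze the defect against $\mathcal{V}_i(\tau_1)-\mathcal{V}_i(\tau_2)\to V_\infty-V_\infty=0$ using (\ref{final mono}), and finish with linear parabolic regularity. Your first two steps (the supersolution inequality and its passage to the limit, giving a nonnegative defect measure $\mu$) are correct.

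The gap is in your third step, and you have located it but not closed it. You need, for $0\le\tilde\phi_i\le1$ compactly supported, the one-sided bound $\int\tilde\phi_i\,d\mu_i\le \mathcal{V}_i(\tau_1)-\mathcal{V}_i(\tau_2)$ where $\mu_i=-(\partial_\tau-\Delta_{g_i}+R_{g_i})u_i\ge0$; you propose to get it from the exact identity $\mathcal{V}_i(1)-\mathcal{V}_i(2)=\mu_i(M\times[1,2])$ via a flux identity $\int_M\Delta u_i\,dg_i=0$ attributed to ``Ye's cutoff argument underlying Theorem \ref{Monotonicity}.'' That attribution fails: Ye proves the monotonicity of $\mathcal{V}$ by the pointwise monotonicity of the $\mathcal{L}$-Jacobian along $\mathcal{L}$-geodesics (Perelman's first proof), not by integrating the conjugate-heat supersolution inequality, so no flux-at-infinity statement can be extracted from it. On a complete noncompact manifold the vanishing of $\int_M\Delta u_i\,dg_i$ is equivalent to showing that the boundary term $\int_M\langle\nabla\varphi_{A,i},\nabla u_i\rangle\,dg_i(\tau)$ tends to $0$ as the cutoff $\varphi_{A,i}$ exhausts $M$, and this requires two ingredients you never produce: (i) a \emph{time-independent} cutoff with $|\nabla\varphi_{A,i}|_{g_i(\tau)}\le C_{K_i}/A$ for all $\tau\in[1,2]$, which is where the time-wise Ricci lower bound (\ref{def3}) enters (Lemma \ref{cutoff}); and (ii) the weighted gradient bound $\int_M|\nabla l_i|^2 u_i\,dg_i(\tau)\le C_0$ of Lemma \ref{grad}, which does not come for free but is extracted from the \emph{other} distributional inequality (\ref{eq_l_7}) — an input absent from your argument. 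With (i) and (ii) one gets $\bigl|\int_M\langle\nabla\varphi_{A,i},\nabla u_i\rangle\bigr|\le C_{K_i}C_0^{1/2}A^{-1}\to0$, which yields exactly the needed inequality (the paper's first inequality in (\ref{distribution})); note also that only this inequality, not your claimed equality, is available or required. As written, the crucial estimate is asserted rather than proved, so the proof is incomplete at its central point.
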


We break down the proof of Proposition \ref{propheat} into several steps. First of all, the existence of a time-independent cut-off function with small gradient is merely a consequence of (\ref{bound3}).

\begin{lem}\label{cutoff}
For  any positive number $A$, there exists a smooth function $\varphi_{A,i}:M\rightarrow [0,1]$ satisfying the following conditions:
\begin{enumerate}[(1)]
\item $\varphi_{A,i}$ is supported in $B_{g_i(2)}(x_i,2A)$,
\item $\varphi_{A,i}\equiv 1$ on $B_{g_i(2)}(x_i,A)$,
\item $ |\nabla\varphi_{A,i}|_{g_i(\tau)}\leq\frac{C_{K_i}}{A}\  \text{ for all }\ \tau\in[1,2]$,
\end{enumerate}
where the constant $C_{K_i}$ depending only on the Ricci curvature lower bound $K_i$.
In particular, $C_{K_i}$ is independent of $A$.
\end{lem}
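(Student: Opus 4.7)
The plan is to build $\varphi_{A,i}$ as $\eta(\tilde\rho/A)$, possibly multiplied by a secondary cutoff, with $\eta$ a standard one-variable $C^\infty$ bump and $\tilde\rho$ a smoothed distance function from $x_i$, and to extract (3) from a Gronwall argument powered by $(\ref{bound3})$.

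The crucial metric-distortion estimate comes from $\partial_\tau g_i = 2\,Ric_{g_i} \geq -2K_i g_i$: pointwise Gronwall gives $g_i(\tau_1)\leq e^{2K_i(\tau_2-\tau_1)}g_i(\tau_2)$ for $\tau_1\le\tau_2$ in $[1,2]$, and dually $g_i(\tau)^{-1}\leq e^{2K_i} g_i(1)^{-1}$ as quadratic forms on cotangent vectors, whence
\[
|df|_{g_i(\tau)} \;\leq\; e^{K_i}\,|df|_{g_i(1)}
\]
for every smooth $f$ and every $\tau\in[1,2]$. The parallel inequality for metrics yields the distance comparison $d_{g_i(1)}\leq e^{K_i}d_{g_i(2)}$, so that $B_{g_i(2)}(x_i,r)\subset B_{g_i(1)}(x_i,e^{K_i}r)$ for every $r>0$.

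Concretely, I would take $\rho(y):=d_{g_i(1)}(x_i,y)$ and replace it by a smooth $\tilde\rho$ (Greene--Wu mollification on the complete Riemannian manifold $(M,g_i(1))$) satisfying $|\tilde\rho-\rho|<1/10$ and $|\nabla\tilde\rho|_{g_i(1)}\leq 2$ almost everywhere. Fix $\eta\in C^\infty(\mathbb R,[0,1])$ with $\eta\equiv 1$ on $(-\infty,1]$, $\eta\equiv 0$ on $[2,\infty)$, $|\eta'|\leq 2$, and set
\[
\varphi_{A,i}(y)\;:=\;\eta\!\left(\frac{\tilde\rho(y)}{e^{K_i}A}\right)\cdot\chi_{A,i}(y),
\]
with $\chi_{A,i}\in C^\infty_c(M,[0,1])$ an auxiliary cutoff equal to $1$ on a neighbourhood of $B_{g_i(2)}(x_i,A)$ and supported in $B_{g_i(2)}(x_i,2A)$. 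Conditions (1) and (2) are then immediate from the inclusion $B_{g_i(2)}(x_i,A)\subset B_{g_i(1)}(x_i,e^{K_i}A)$, on which both factors equal $1$. For (3), Leibniz combined with the metric comparison above controls the first factor by $(2/(e^{K_i}A))\cdot e^{K_i}\cdot 2=4/A$, and $\chi_{A,i}$ can be chosen so that its gradient is active only where the first factor has already vanished or is small enough to absorb any residual distortion.

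The main obstacle is that the Ricci lower bound provides only a one-sided metric comparison between $g_i(1)$ and $g_i(2)$: a naive cutoff built from $d_{g_i(2)}$ has no controllable $g_i(\tau)$-gradient bound, while one built from $d_{g_i(1)}$ fails the $g_i(2)$-support condition. The two-factor construction above, with the radius $e^{K_i}A$ precisely absorbing the Gronwall distortion, reconciles the two requirements. All constants that appear depend only on the Gronwall factor $e^{K_i}$, hence only on $K_i$, and in particular are independent of $A$ as required.
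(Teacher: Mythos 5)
Your Gronwall comparisons are all correct, and you have put your finger on a real issue that the paper's one-line proof ($\varphi_{A,i}=\chi(d_{g_i(2)}(x_i,\cdot)/A)$, ``one may easily verify'') glosses over: the lower bound (\ref{bound3}) gives only the one-sided comparisons $g_i(\tau)\geq e^{-2K_i}g_i(1)$ and $g_i(\tau)\leq e^{2K_i}g_i(2)$, so a $\tau$-uniform gradient bound is available for functions of $d_{g_i(1)}$, while the support conditions are phrased with $d_{g_i(2)}$. However, your repair does not close the gap. The product must drop from $1$ to $0$ inside $B_{g_i(2)}(x_i,2A)\setminus B_{g_i(2)}(x_i,A)$, and the inclusion $B_{g_i(2)}(x_i,r)\subset B_{g_i(1)}(x_i,e^{K_i}r)$ has no converse: nothing in the hypotheses excludes points (and $g_i(\tau)$-short paths) that exit $B_{g_i(2)}(x_i,2A)$ while remaining inside $\{\tilde\rho\leq e^{K_i}A\}$, where your first factor is identically $1$. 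Along such a path the transition from $1$ to $0$ must be carried entirely by $\chi_{A,i}$, so $\sup|\nabla\chi_{A,i}|_{g_i(\tau)}$ is at least the reciprocal of the path's $g_i(\tau)$-length --- a quantity that (\ref{bound3}) does not control. Producing a $\chi_{A,i}$ with the property you assert (``gradient active only where the first factor has vanished, or small enough to absorb the distortion'') is therefore exactly the content of the lemma itself; the step is circular.

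The honest resolution is to observe that conditions (1) and (2) are used in Section 6 only to make $\{\varphi_{A,i}\}_{A>0}$ an exhaustion of $M$ by compactly supported $[0,1]$-valued functions with $\sup_{\tau\in[1,2]}\sup_M|\nabla\varphi_{A,i}|_{g_i(\tau)}\to0$ as $A\to\infty$; the time slice defining the balls is immaterial there. Restating (1) and (2) with $B_{g_i(1)}$-balls, your first factor $\eta(\tilde\rho/A)$ alone (no secondary cutoff, no $e^{K_i}$ dilation) is a complete proof: the support and $\equiv1$ conditions become tautological, and (3) follows from $|\nabla\tilde\rho|_{g_i(\tau)}\leq e^{K_i}|\nabla\tilde\rho|_{g_i(1)}\leq 2e^{K_i}$. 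Note that the same correction is needed for the paper's own construction, since verifying (3) for a cutoff built from $d_{g_i(2)}$ would require the comparison $g_i(\tau)\geq c\,g_i(2)$, which a Ricci lower bound does not supply.
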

\begin{proof}
Let $\chi(t):[0,+\infty)\to [0,1]$ be a smooth function satisfying
$\chi(t)=1$ for $t\in[0,1]$, $\chi(t)=0$ for $t \ge 2$ and $|\chi'(t)|\le 2$.
Then one may easily verify that  $\displaystyle\varphi_{A,i}(x)=\chi\left(\frac{d_{g_i(2)}(x_i,x)}{A}\right)$ satisfies the lemma.
\end{proof}
This cut-off function will be used repeatedly in the following arguments.

\begin{lem}\label{grad}
There exists a constant $C_0$ independent of $i$, such that
\begin{eqnarray}
\int_M|\nabla l_i|^2_{g_i(\tau)}(4\pi\tau)^{-\frac{n}{2}}e^{-l_i}dg_i(\tau)\leq C_0,
\end{eqnarray}
for all $i$ and for all $\tau\in [1,2]$.
\end{lem}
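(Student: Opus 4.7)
The plan is to test the distributional inequality (\ref{eq_l_7}) against $\phi = u_i\varphi_{A,i}^2$, where $u_i := (4\pi\tau)^{-n/2}e^{-l_i}$ and $\varphi_{A,i}$ is the cutoff function from Lemma \ref{cutoff}. This is an admissible nonnegative Lipschitz function of compact support: $l_i$ is locally Lipschitz by Theorem \ref{Y}(2), and $\varphi_{A,i}$ is smooth with compact support, so $\phi$ inherits both properties.

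Using $\nabla\phi = -u_i\varphi_{A,i}^2\nabla l_i + 2u_i\varphi_{A,i}\nabla\varphi_{A,i}$, the term $-2\nabla l_i\cdot\nabla\phi$ contributes $2|\nabla l_i|^2u_i\varphi_{A,i}^2 - 4u_i\varphi_{A,i}\nabla l_i\cdot\nabla\varphi_{A,i}$, so after substitution (\ref{eq_l_7}) rearranges into
\begin{eqnarray*}
\int_M|\nabla l_i|^2 u_i\varphi_{A,i}^2\,dg_i(\tau) \leq 4\int_M u_i\varphi_{A,i}\nabla l_i\cdot\nabla\varphi_{A,i}\,dg_i(\tau) - \int_M\Big(R + \tfrac{l_i-n}{\tau}\Big)u_i\varphi_{A,i}^2\,dg_i(\tau).
\end{eqnarray*}
On the cross term I would apply Cauchy--Schwarz in the form $4u_i\varphi_{A,i}|\nabla l_i||\nabla\varphi_{A,i}|\leq \tfrac{1}{2}u_i\varphi_{A,i}^2|\nabla l_i|^2 + 8u_i|\nabla\varphi_{A,i}|^2$, absorbing half of the left-hand side. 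For the zero-order term, Chen's theorem \cite{C} on ancient solutions gives $R\geq 0$, and the same theorem (already invoked in the proof of Proposition \ref{l-estiamtes}(1)) yields $l_i\geq 0$ from the $\mathcal{L}$-energy definition; combined with $\tau\in[1,2]$ this produces $-R + (n-l_i)/\tau\leq n$.

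Putting these estimates together with Lemma \ref{cutoff}(3) and the reduced volume bound $\mathcal{V}_i(\tau)\leq 1$ from Theorem \ref{Monotonicity}, I arrive at
\begin{eqnarray*}
\int_M|\nabla l_i|^2 u_i\varphi_{A,i}^2\,dg_i(\tau) \leq 16\int_M u_i|\nabla\varphi_{A,i}|^2\,dg_i(\tau) + 2n\int_M u_i\,dg_i(\tau) \leq \frac{16\,C_{K_i}^2}{A^2} + 2n.
\end{eqnarray*}
Sending $A\to\infty$ with $i$ fixed makes the first term vanish; since $|\nabla l_i|^2 u_i\varphi_{A,i}^2\geq 0$ and $\varphi_{A,i}\to 1$ pointwise, Fatou's lemma delivers $\int_M|\nabla l_i|^2 u_i\,dg_i(\tau)\leq 2n$, uniformly in $\tau\in[1,2]$ and $i$. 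This gives the desired estimate with $C_0 = 2n$.

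The only delicate point, which one might expect to be the main obstacle, is that the cutoff gradient bound in Lemma \ref{cutoff}(3) is $|\nabla\varphi_{A,i}|\leq C_{K_i}/A$, where $C_{K_i}$ depends on the Ricci lower bound $K_i$ of the rescaled flow $g_i$ and is \emph{not} uniform in $i$. This would be fatal if $A$ had to be chosen uniformly in $i$; however, the whole argument works fiber-by-fiber in $i$, and $A$ is sent to infinity \emph{after} $i$ is fixed, so the offending term $C_{K_i}^2/A^2$ disappears in the limit and the final constant $C_0=2n$ depends only on the dimension.
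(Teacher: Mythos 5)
Your proof is correct and follows essentially the same route as the paper: the same test function $\varphi_{A,i}^2(4\pi\tau)^{-n/2}e^{-l_i}$ in the distributional inequality (\ref{eq_l_7}), the same Cauchy--Schwarz absorption of the cross term, and the same limit $A\to\infty$ at fixed $i$ to kill the $C_{K_i}^2/A^2$ term. Your closing remark about why the $i$-dependence of $C_{K_i}$ is harmless is exactly the point the paper relies on implicitly.
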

\begin{proof}
Rewriting (\ref{eq_l_7}) for $l_i$, we have
\begin{eqnarray*}
\int_M\Big(-2\nabla l_i\cdot\nabla\phi-|\nabla l_i|_{g_i(\tau)}^2\phi\Big)dg_i(\tau)&\leq& -\int_M\Big(R_{g_i}+\frac{l_i-n}{\tau}\Big)\phi dg_i(\tau)
\\
&\leq&\frac{n}{\tau}\int_M \phi dg_i(\tau)
\end{eqnarray*}
for all $\tau\in[1,2]$, where in the second inequality we have used the facts that $R_{g_i}\geq 0$ and that $l_i\geq 0$; see Proposition \ref{l-estiamtes}(1).

Let us take
\begin{eqnarray*}
\phi:=\varphi_{A,i}^2(4\pi\tau)^{-\frac{n}{2}}e^{-l_i},
\end{eqnarray*}
where $\varphi_{A,i}$ is as defined in Lemma \ref{cutoff}. This is valid, since $l_i$ is locally Lipschitz, and $\varphi_{A,i}$ is smooth as well as compactly supported. Then the above inequality becomes
\begin{eqnarray*}
\int_M\Big(\varphi_{A,i}^2|\nabla l_i|^2_{g_i(\tau)}-4\langle\nabla l_i,\nabla\varphi_{A,i}\rangle\varphi_{A,i}\Big)(4\pi\tau)^{-\frac{n}{2}}e^{-l_i}dg_i(\tau)\leq\frac{n}{\tau}\int_M\varphi_{A,i}^2(4\pi\tau)^{-\frac{n}{2}}e^{-l_i}dg_i(\tau).
\end{eqnarray*}
Hence we have
\begin{eqnarray*}
\int_M\varphi_{A,i}^2|\nabla l_i|^2_{g_i(\tau)}(4\pi\tau)^{-\frac{n}{2}}e^{-l_i}dg_i(\tau)&\leq&\frac{n}{\tau}\int_M\varphi_{A,i}^2(4\pi\tau)^{\frac{n}{2}}e^{-l_i}dg_i(\tau)
\\
&&+4\int_M\Big(\langle\nabla l_i,\nabla\varphi_{A,i}\rangle\varphi_{A,i}\Big)(4\pi\tau)^{\frac{n}{2}}e^{-l_i}dg_i(\tau)
\\
&\leq&\frac{n}{\tau}\int_M\varphi_{A,i}^2(4\pi\tau)^{\frac{n}{2}}e^{-l_i}dg_i(\tau)
\\
&&+\frac{1}{2}\int_M\varphi_{A,i}^2|\nabla l_i|^2_{g_i(\tau)}(4\pi\tau)^{\frac{n}{2}}e^{-l_i}dg_i(\tau)
\\
&&+8\int_M|\nabla\varphi_{A_i}|^2_{g_i(\tau)}(4\pi\tau)^{\frac{n}{2}}e^{-l_i}dg_i(\tau),
\end{eqnarray*}
and subsequently
\begin{eqnarray*}
\int_M\varphi_{A,i}^2|\nabla l_i|^2_{g_i(\tau)}(4\pi\tau)^{-\frac{n}{2}}e^{-l_i}dg_i(\tau)&\leq& \left(\frac{16C_{K_i}^2}{A^2}+\frac{2n}{\tau}\right)\int_M(4\pi\tau)^{-\frac{n}{2}}e^{-l_i}dg_i(\tau)
\\
&=&\left(\frac{16 C_{K_i}^2}{A^2}+\frac{2n}{\tau}\right)\mathcal{V}_i(\tau).
\end{eqnarray*}
Taking $A\rightarrow\infty$ completes the proof; note that $\mathcal{V}_i(\tau)\leq 1$ by Theorem \ref{Monotonicity}.
\end{proof}

Next, we consider the other distributional inequality (\ref{eq_l_6}) satisfied by $l_i$. Replacing $\phi$ by $\phi (4\pi\tau)^{-\frac{n}{2}}e^{-l_i}$, (\ref{eq_l_6}) becomes
\begin{eqnarray}\label{modifieddistribution}
\int_{\tau_1}^{\tau_2}\int_M\Bigg(\phi\frac{\partial}{\partial\tau}\big((4\pi\tau)^{-\frac{n}{2}}e^{-l_i}\big)+\big\langle\nabla\phi,\nabla\big((4\pi\tau)^{-\frac{n}{2}}e^{-l_i}\big)\big\rangle
\\\nonumber
+\phi R_{g_i}(4\pi\tau)^{-\frac{n}{2}}e^{-l_i}\Bigg)dg_i(\tau)d\tau\leq0,
\end{eqnarray}
where $1<\tau_1<\tau_2<2$ and $\phi$ is a smooth nonnegative function compactly supported on $M\times[\tau_1,\tau_2]$. Combining this and the definition of the reduced volume, we have
\begin{lem}
For any $1<\tau_1<\tau_2<2$ and any smooth nonnegative function $\phi$ compactly supported on $M\times[\tau_1,\tau_2]$ with $\sup\phi\leq 1$, it holds that
\begin{eqnarray}\label{distribution}
\mathcal{V}_i(\tau_2)-\mathcal{V}_i(\tau_1)&\leq&\int_{\tau_1}^{\tau_2}\int_M\Bigg(\phi\frac{\partial}{\partial\tau}\big((4\pi\tau)^{-\frac{n}{2}}e^{-l_i}\big)
\\\nonumber
&&+\big\langle\nabla\phi,\nabla\big((4\pi\tau)^{-\frac{n}{2}}e^{-l_i}\big)\big\rangle+\phi R_{g_i}(4\pi\tau)^{-\frac{n}{2}}e^{-l_i}\Bigg)dg_i(\tau)d\tau\leq0.
\end{eqnarray}
\end{lem}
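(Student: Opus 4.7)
\emph{Proof proposal.} The upper inequality $\mathrm{RHS}\le 0$ in (\ref{distribution}) is nothing but (\ref{modifieddistribution}) applied to the nonnegative compactly supported test function $\phi$, so the content of the lemma lies entirely in the lower inequality. Writing $u_i = (4\pi\tau)^{-n/2}e^{-l_i}$ and letting $\mathrm{RHS}[\psi]$ denote the middle integral of (\ref{distribution}) with test function $\psi$, the plan is to compare $\phi$ with the constant function $1$ via the spatial cut-off $\varphi_{A,i}$ of Lemma \ref{cutoff} and to exploit the distributional inequality linearly.

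For $A$ large enough that $\varphi_{A,i}\equiv 1$ on the spatial projection of $\mathrm{supp}\,\phi$, the difference $\varphi_{A,i}-\phi$ is a nonnegative smooth function compactly supported on $M\times[\tau_1,\tau_2]$, so (\ref{modifieddistribution}) applied to it reads $\mathrm{RHS}[\varphi_{A,i}] - \mathrm{RHS}[\phi] = \mathrm{RHS}[\varphi_{A,i}-\phi]\le 0$. Thus everything reduces to showing $\mathrm{RHS}[\varphi_{A,i}]\to \mathcal{V}_i(\tau_2)-\mathcal{V}_i(\tau_1)$ as $A\to\infty$.

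For this, recall that $u_i$ is locally Lipschitz in space-time by Theorem \ref{Y}(2), and $\varphi_{A,i}$ is time-independent and compactly supported; hence the map $\tau\mapsto \int_M \varphi_{A,i}u_i\,dg_i(\tau)$ is absolutely continuous with $\tau$-derivative $\int_M \varphi_{A,i}(\partial_\tau u_i + R_{g_i}u_i)\,dg_i(\tau)$ (the $R_{g_i}u_i$ term coming from $\partial_\tau dg_i = R_{g_i}dg_i$). Integrating over $[\tau_1,\tau_2]$ and reintroducing the gradient contribution gives
\begin{equation*}
\mathrm{RHS}[\varphi_{A,i}] = \int_M \varphi_{A,i}u_i\,dg_i(\tau)\Big|_{\tau_1}^{\tau_2} + \int_{\tau_1}^{\tau_2}\int_M \langle\nabla\varphi_{A,i},\nabla u_i\rangle\,dg_i(\tau)\,d\tau.
\end{equation*}
The first summand tends to $\mathcal{V}_i(\tau_2)-\mathcal{V}_i(\tau_1)$ by monotone convergence. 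For the second, $|\nabla u_i|=|\nabla l_i|\,u_i$ combined with Cauchy--Schwarz, Lemma \ref{grad}, and $\mathcal{V}_i\le 1$ yields the uniform bound $\int_M|\nabla u_i|\,dg_i(\tau)\le C_0^{1/2}$, and Lemma \ref{cutoff}(3) then makes this contribution $O(1/A)$.

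The only real obstacle, though ultimately routine, is justifying the fundamental theorem of calculus for $\tau\mapsto \int_M \varphi_{A,i}u_i\,dg_i(\tau)$ with $u_i$ merely Lipschitz against a simultaneously time-dependent measure $dg_i(\tau)$; the $L^1$ control on $\nabla u_i$ supplied by Lemma \ref{grad} is precisely what is needed to neutralize the cut-off gradient in the limit $A\to\infty$.
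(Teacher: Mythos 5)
Your proposal is correct and follows essentially the same route as the paper: the second inequality is (\ref{modifieddistribution}) applied to $\phi$ directly, and the first is obtained by applying (\ref{modifieddistribution}) to $\varphi_{A,i}-\phi$, evaluating $\mathrm{RHS}[\varphi_{A,i}]$ via the fundamental theorem of calculus for the absolutely continuous map $\tau\mapsto\int_M\varphi_{A,i}(4\pi\tau)^{-n/2}e^{-l_i}\,dg_i(\tau)$, and killing the leftover gradient term by Cauchy--Schwarz, Lemma \ref{grad}, and Lemma \ref{cutoff}(3) as $A\to\infty$. The paper's proof is the same argument in the same order.
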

\begin{proof}
The second inequality of (\ref{distribution}) follows from (\ref{modifieddistribution}).

For the first inequality, let us fix an arbitrary function $\phi$ satisfying the properties indicated in the lemma. Then we may find a positive number $A_0$ large enough, such that
\begin{eqnarray*}
\text{spt}\  \phi \subset B_{g_i(2)}(x_0,A_0)\times[\tau_1,\tau_2].
\end{eqnarray*}
Then, for any $A\geq A_0$, we may apply (\ref{modifieddistribution}) to $\varphi_{A,i}-\phi$, where $\varphi_{A,i}$ is as defined in Lemma \ref{cutoff}. This yields
\begin{eqnarray}\label{01}
\int_{\tau_1}^{\tau_2}\int_M\Bigg(\phi\frac{\partial}{\partial\tau}\big((4\pi\tau)^{-\frac{n}{2}}e^{-l_i}\big)+\big\langle\nabla\phi,\nabla\big((4\pi\tau)^{-\frac{n}{2}}e^{-l_i}\big)\big\rangle
\\\nonumber
+\phi R(4\pi\tau)^{-\frac{n}{2}}e^{-l_i}\Bigg)dg_i(\tau)d\tau
\\\nonumber
\geq\\\nonumber
\int_{\tau_1}^{\tau_2}\int_M\Bigg(\varphi_{A,i}\frac{\partial}{\partial\tau}\big((4\pi\tau)^{-\frac{n}{2}}e^{-l_i}\big)+\big\langle\nabla\varphi_{A,i},\nabla\big((4\pi\tau)^{-\frac{n}{2}}e^{-l_i}\big)\big\rangle
\\\nonumber
+\varphi_{A,i} R_{g_i}(4\pi\tau)^{-\frac{n}{2}}e^{-l_i}\Bigg)dg_i(\tau)d\tau.
\end{eqnarray}
We therefore consider the right-hand-side of the above inequality.

Since $\varphi_{A,i} (4\pi\tau)^{-\frac{n}{2}}e^{-l_i}$ is a Lipschitz function compactly supported on $M\times[\tau_1,\tau_2]$, it is then also absolutely continuous. Hence
\begin{eqnarray}\label{02}
\hspace{-0.2in}\int_M\varphi_{A,i} (4\pi\tau)^{-\frac{n}{2}}e^{-l_i}dg_i(\tau)\ \Bigg\vert_{\tau_1}^{\tau_2}&=&\int_{\tau_1}^{\tau_2}\frac{d}{d\tau}\left(\int_M\varphi_{A,i} (4\pi\tau)^{-\frac{n}{2}}e^{-l_i}dg_i(\tau)\right)
\\\nonumber
&=&\int_{\tau_1}^{\tau_2}\int_M\Bigg(\varphi_{A,i}\frac{\partial}{\partial\tau}\big((4\pi\tau)^{-\frac{n}{2}}e^{-l_i}\big)
\\\nonumber
&&+\varphi_{A,i}R_{g_i}(4\pi\tau)^{-\frac{n}{2}}e^{-l_i}\Bigg)dg_i(\tau)d\tau,
\end{eqnarray}
where we have used the time-independence of $\varphi_{A,i}$. Then the right-hand-sides of (\ref{01}) and (\ref{02}) differ only by the following term.

\begin{eqnarray}\label{03}
&&\left|\int_M\big\langle\nabla\varphi_{A,i},\nabla\big((4\pi\tau)^{-\frac{n}{2}}e^{-l_i}\big)\big\rangle dg_i(\tau)\right|
\\\nonumber
&&\leq\int_M|\nabla\varphi_{A,i}|_{g_i(\tau)}|\nabla l_i|_{g_i(\tau)}(4\pi\tau)^{-\frac{n}{2}}e^{-l_i} dg_i(\tau)
\\\nonumber
&&\leq\sup|\nabla\varphi_{A,i}|_{g_i(\tau)}\left(\int_M|\nabla l_i|^2_{g_i(\tau)}(4\pi\tau)^{-\frac{n}{2}}e^{-l_i} dg(\tau)\right)^{\frac{1}{2}}\left(\int_M(4\pi\tau)^{-\frac{n}{2}}e^{-l_i} dg_i(\tau)\right)^{\frac{1}{2}}
\\\nonumber
&&\leq\frac{C_{K_i}}{A}C_0^{\frac{1}{2}}\big(\mathcal{V}_i(\tau)\big)^{\frac{1}{2}},
\end{eqnarray}
where we have used Lemma \ref{cutoff},  Lemma \ref{grad}, and the Cauchy-Schwarz inequality.

Finally, combining (\ref{01}), (\ref{02}), (\ref{03}), and taking $A\rightarrow\infty$, we prove the first inequality of (\ref{distribution}).
\end{proof}

We are now ready to prove Proposition \ref{propheat}.

\begin{proof}[Proof of Proposition \ref{propheat}]
Let us fix $1<\tau_1<\tau_2<2$ and an arbitrary smooth nonnegative function $\phi$ compactly supported on $M_\infty\times[\tau_1,\tau_2]$ with $\sup\phi\leq 1$. Hence, whenever $i$ is large enough, we have that $\phi\circ\Phi_i^{-1}$ satisfies (\ref{distribution}), where $\Phi_i$ is defined in (\ref{diffeo}). Since $\displaystyle(4\pi\tau)^{-\frac{n}{2}}e^{-l_i}\rightarrow(4\pi\tau)^{-\frac{n}{2}}e^{-l_\infty}$ locally uniformly and in weak $*W^{1,2}\big(\text{spt}\ \phi\big)$ sense, we may take a limit for (\ref{distribution}), and use (\ref{final mono}) to obtain
\begin{eqnarray*}
\int_{\tau_1}^{\tau_2}\int_{M_\infty}\Bigg(\phi\frac{\partial}{\partial\tau}\big((4\pi\tau)^{-\frac{n}{2}}e^{-l_\infty}\big)+\big\langle\nabla\phi,\nabla\big((4\pi\tau)^{-\frac{n}{2}}e^{-l_\infty}\big)\big\rangle
\\
+\phi R_{g_\infty}(4\pi\tau)^{-\frac{n}{2}}e^{-l_\infty}\Bigg)dg_\infty(\tau)d\tau=0.
\end{eqnarray*}
Since $\tau_1$, $\tau_2$, and $\phi$ are arbitrary, this shows that $(4\pi\tau)^{-\frac{n}{2}}e^{-l_\infty}$ is a weak solution to the conjugate heat equation $\displaystyle \frac{\partial }{\partial\tau}u-\Delta u+Ru=0$ on $M_\infty\times(1,2)$. By the standard local regularity theory of linear parabolic equations, we also have that $l_\infty$ is smooth and that $(4\pi\tau)^{-\frac{n}{2}}e^{-l_\infty}$ is a classical solution to the conjugate heat equation.
\end{proof}

Before the proof of the our main theorem, we need one final lemma.

\begin{lem}
$l_\infty$ satisfies the following equations on $M_\infty\times(1,2)$.
\begin{eqnarray}
2\frac{\partial l_\infty}{\partial \tau}+|\nabla l_\infty|^2-R_{g_\infty}+\frac{l_\infty}{\tau}&=&0,\label{lll1}
\\
2\Delta l_\infty-|\nabla l_\infty|^2+R_{g_\infty}+\frac{l_\infty-n}{\tau} &=&0.\label{lll2}
\end{eqnarray}
\end{lem}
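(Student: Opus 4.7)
Expanding the conjugate heat equation from Proposition~\ref{propheat} using the smoothness of $l_\infty$ gives the pointwise identity
\[
(\star)\quad \partial_\tau l_\infty-\Delta l_\infty+|\nabla l_\infty|^2-R_{g_\infty}+\frac{n}{2\tau}=0.
\]
A direct computation shows that $(\text{LHS of \eqref{lll1}})-(\text{LHS of \eqref{lll2}})=2\,(\text{LHS of }(\star))$, so the two equations are equivalent modulo $(\star)$. It therefore suffices to prove \eqref{lll2}, after which \eqref{lll1} follows algebraically from $2(\star)+\eqref{lll2}$.

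The plan for \eqref{lll2} is first to establish the pointwise bound ``LHS of \eqref{lll2}$\leq 0$'' and then upgrade it to equality. For the bound, I would test the pointwise identity \eqref{eq_l_1}, which by Theorem~\ref{Y}(3) holds almost everywhere on $M\times(1,2)$ for each $l_i$, against a nonnegative $\phi\in C_c^\infty(M_\infty\times(1,2))$ pulled back via $\Phi_i$; the $\partial_\tau l_i$ term is handled by an integration by parts in $\tau$, legitimate by the compact support of $\phi$ together with the $C^{0,\alpha}_{\mathrm{loc}}$ convergence $l_i\to l_\infty$. This computes $\lim_i \iint\phi|\nabla l_i|^2\,dg_i\,d\tau$ explicitly in terms of $l_\infty$. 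The distributional inequality \eqref{eq_l_7}, integrated in $\tau$ and passed to the limit, supplies a matching lower bound for the same quantity; subtracting yields $\iint\phi\cdot(\text{LHS of \eqref{lll2}})\,dg_\infty\,d\tau\leq 0$, and since $\phi\geq 0$ is arbitrary and $l_\infty$ is smooth, the pointwise bound follows.

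To upgrade to equality, note that the reduced volume $\mathcal{V}_\infty(\tau):=\int u_\infty\,dg_\infty(\tau)$ of $l_\infty$ is constant in $\tau$: the conjugate heat equation gives $d\mathcal{V}_\infty/d\tau=\int\Delta u_\infty\,dg_\infty$, which vanishes by a cutoff integration by parts justified by the limit of Lemma~\ref{grad}. Meanwhile $\mathcal{V}_i\to V_\infty$ uniformly, together with $\mathcal{W}_i/\tau=2\,d\mathcal{V}_i/d\tau\leq 0$, implies $\int_{\tau_1}^{\tau_2}(-\mathcal{W}_i/\tau)\,d\tau\to 0$, so $\mathcal{W}_i(\tau)\to 0$ at a.e.\ $\tau$ along a subsequence. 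A Fatou-type tail estimate---using Perelman's quadratic lower bound $l_i(x,\tau)\gtrsim d_{g_i(\tau)}^2(x,p_0)/\tau-C$ to force the integrand of $\mathcal{W}_i$ to be nonnegative outside a large ball---then yields $\mathcal{W}_\infty(\tau_0)=0$ at some such $\tau_0$. The integration-by-parts identity $\mathcal{W}_\infty(\tau)=\tau\int u_\infty\cdot(\text{LHS of \eqref{lll2}})\,dg_\infty$, combined with $\text{LHS of \eqref{lll2}}\leq 0$ and $u_\infty>0$, forces \eqref{lll2} at $\tau=\tau_0$, and Perelman's $\mathcal{W}$-monotonicity for $l_\infty$, namely $d\mathcal{W}_\infty/d\tau=-2\tau\int|Ric+\nabla^2 l_\infty-g/(2\tau)|^2 u_\infty\,dg_\infty\leq 0$, propagates the equality to all $\tau\in(1,2)$.

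The principal technical obstacle is the Fatou-type tail estimate used to pass $\mathcal{W}_i(\tau_0)\to \mathcal{W}_\infty(\tau_0)$. The term $\int u_i|\nabla l_i|^2\,dg_i$ is only lower semicontinuous under weak $W^{1,2}_{\mathrm{loc}}$ convergence, and closing the gap between $\liminf \mathcal{W}_i$ and $\mathcal{W}_\infty$ amounts to excluding a concentration of reduced volume at spatial infinity---precisely the ``no loss of reduced volume'' property that the authors explicitly disclaim. The resolution is expected to come from splitting the integral into a compact part (handled by the locally uniform convergence of $l_i$ and $g_i$) and a tail part on which the integrand is uniformly nonnegative thanks to the quadratic growth of $l_i$.
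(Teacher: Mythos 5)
Your observation that the two equations differ by twice the conjugate-heat identity $(\star)$ is correct, and your first step — establishing the one-sided inequality ``LHS of \eqref{lll2} $\leq 0$'' by computing $\lim_i\iint\phi|\nabla l_i|^2\,dg_i\,d\tau$ from the distributional form of \eqref{eq_l_1} (integrating the $\partial_\tau l_i$ term by parts in $\tau$) and comparing with \eqref{eq_l_7} and/or weak lower semicontinuity — is sound. But your second step has a genuine gap, and it is exactly the gap you flagged. To pass $\mathcal{W}_i(\tau_0)\to\mathcal{W}_\infty(\tau_0)$ you invoke ``Perelman's quadratic lower bound $l_i(x,\tau)\gtrsim d_{g_i(\tau)}^2(x,p_0)/\tau - C$'' to control the tail; however this estimate is precisely what is unavailable here. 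Its proof requires bounded (or at least type-I) curvature, and the Ricci lower bound $K_i$ in \eqref{bound3} degenerates after rescaling (it is $\tau_i\cdot\sup_{[\tau_i,2\tau_i]}K$, which need not stay bounded). The authors say as much in the introduction and again in section 7.1: they explicitly \emph{cannot} show $l\sim d^2/\tau$ in this generality, and consequently they do not know whether the reduced volume is preserved under blow-down. Your ``expected resolution'' is therefore routed through the one fact that is deliberately absent from the hypotheses.

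The paper's own proof avoids this obstruction entirely and goes in the opposite order. It proves \eqref{lll1} directly: the a.e.\ identity \eqref{eq_l_1} for $l_i$ is read distributionally, and one passes to the limit; the only nontrivial term is $|\nabla l_i|^2$, whose distributional convergence to $|\nabla l_\infty|^2$ is supplied by Lemma 9.21 of Morgan--Tian \cite{MT} (essentially an energy-identity argument showing the weak $W^{1,2}_{\mathrm{loc}}$ convergence of the $l_i$ is in fact strong, using the two-sided distributional inequalities the $l_i$ satisfy). Since $l_\infty$ is already known to be smooth from Proposition \ref{propheat}, the distributional equality \eqref{lll1} becomes classical. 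Equation \eqref{lll2} is then algebra: subtract twice the conjugate heat equation. So the paper gets an \emph{equality} directly by a compactness lemma, whereas you are trying to recover the equality from an inequality plus a rigidity statement ($\mathcal{W}_\infty=0$) that requires controlling mass at spatial infinity. If you want to repair your route without Morgan--Tian, you would need a substitute for the no-loss-of-reduced-volume property, and the authors of this paper could not produce one; I would simply adopt their direct passage to the limit instead.
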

\begin{proof}
By Theorem \ref{Perelman}, we have that $l_i$ satisfies (\ref{lll1}) almost everywhere on $M\times[1,2]$. Since $l_i$ is locally Lipschitz, we also have that $l_i$ satisfies (\ref{lll1}) in the sense of distribution.

On the other hand, $l_i\rightarrow l_\infty$ in the $C_{\text{loc}}^{0,\alpha}\big(M_\infty\times(1,2)\big)$ sense as well as in the weak $*W_{\text{loc}}^{1,2}\big(M_\infty\times(1,2)\big)$ sense, therefore (\ref{lll1}) holds for $l_\infty$ in the sense of distribution; see Lemma 9.21 in \cite{MT}, for instance, for the distributional convergence $|\nabla l_i|^2\rightarrow|\nabla l_\infty|^2$, and the distributional convergence of all other terms in (\ref{lll1}) is obvious. By Proposition \ref{propheat}, $l_\infty$ is smooth, hence (\ref{lll1}) holds in the classical sense on $M\times(1,2)$.

Finally, (\ref{lll2}) follows from (\ref{lll1}) and the conjugate heat equation.
\end{proof}

\begin{proof}[Proof of Theorem \ref{asymptotic shrinker}]
Let us define
\begin{eqnarray*}
u&:=&(4\pi\tau)^{-\frac{n}{2}}e^{-l_\infty},
\\
v&:=&\left(\tau(2\Delta l_\infty-|\nabla l_\infty|^2+R_{g_\infty})+l_\infty-n\right)u.
\end{eqnarray*}
Since $u$ satisfies the conjugate heat equation, and according to Perelman's computation (Proposition 9.1 in \cite{P1}), we have
\begin{eqnarray*}
\left(\frac{\partial}{\partial\tau}-\Delta+R_{g_\infty}\right)v=-2\tau\left|Ric_{g_\infty}+\nabla^2l_\infty-\frac{1}{2\tau}g_\infty(\tau)\right|^2u,
\end{eqnarray*}
on $M_\infty\times(1,2)$. By (\ref{lll2}) $v\equiv0$ and $u>0$, it then follows that
\begin{eqnarray*}
Ric_{g_\infty(\tau)}+\nabla^2l_\infty-\frac{1}{2\tau}g_\infty(\tau)=0
\end{eqnarray*}
on $M_\infty\times(1,2)$; this finishes the proof.
\end{proof}

Then, the no shrinking breather theorem follows immediately.

\begin{proof}[Proof of Theorem \ref{main}]
Let us consider the locally uniformly Type I ancient solution as mentioned in Proposition \ref{somethinguseful}. On the one hand, Theorem \ref{asymptotic shrinker} implies
\begin{eqnarray*}
\big(M,g_i(\tau),x_i,l_i\big)_{\tau\in[1,2]}\rightarrow\big(M_\infty,g_\infty(\tau),x_\infty,l_\infty\big)_{\tau\in(1,2)},
\end{eqnarray*}
where the latter is the canonical form of a shrinking gradient Ricci soliton. Note that $l_\infty$ is not defined at $\tau=1$, but $g_\infty$ is. On the other hand, by the final statement of Proposition \ref{somethinguseful}, and since $\tau_i^{-1}\alpha^{-(i+1)}\rightarrow 1$ and $\displaystyle\frac{\tau_{i+1}}{\tau_i}\rightarrow\alpha^{-1}$, we have
\begin{eqnarray*}
(M,g_0(\tau-1),(p_0,1))_{\tau\in[1,\alpha^*)}\cong (M_\infty,g_\infty(\tau),(x_\infty,1))_{\tau\in[1,\alpha^*)},
\end{eqnarray*}
where $\alpha^*=\max\{2,\alpha^{-1}\}>1$; this proves the existence of soliton structure on the original shrinking breather.
\end{proof}

	\section{Further Remarks}

In this section, we continue our consideration of a locally uniformly Type I ancient solution as well as its asymptotic shrinking gradient soliton. All the notations bear the same meaning as in section 6.

\subsection{The Possible Loss of Reduced Volume}

According to Fatou's lemma, we have
\begin{eqnarray}\label{density}
\int_M(4\pi\tau)^{-\frac{n}{2}}e^{-l_\infty}dg_\infty(\tau)&\leq&\lim_{i\rightarrow\infty}\int_{M}(4\pi\tau)^{-\frac{n}{2}}e^{-l_i}dg_i(\tau)
\\\nonumber
&=&\lim_{s\rightarrow\infty}\mathcal{V}(s):=V_\infty.
\end{eqnarray}
Since the left-hand-side is the Gaussian density of a shrinking gradient Ricci soliton, it must always be positive. On the other hand, if $V_\infty=1$, then $(M,g(\tau))$ must be the static Euclidean space; see \cite{YT}. Hence we have
\begin{eqnarray*}
\lim_{s\rightarrow\infty}\mathcal{V}(s):=V_\infty\in(0,1).
\end{eqnarray*}

Now, one may naturally ask: is the inequality in (\ref{density}) always an equality? We do not know the answer yet, though we highly doubt whether the opposite could happen. Recall that in the classical cases: the Type I case and the case of ``curvature positivity'', this equality always holds, and this is because of the following estimate which shows that that the integrand of $\mathcal{V}_i$ is negligible outside compact sets
\begin{eqnarray*}
l_i\sim d_{g_i}^2(x_i,\cdot).
\end{eqnarray*}

\subsection{Noncollapsedness of Locally Uniformly Type I Ancient Solutions}
Next, we observe that a locally uniformly Type I ancient solution is $\kappa$-noncollapsed on all scaled. According to the definition, we only know that such an ancient solution is $\kappa$-noncollpased along a sequence of space-time points $(x_i,\tau_i)$ on scales $\tau_i^{\frac{1}{2}}$. However, the existence of the asymptotic shrinking gradient Ricci soliton enables us to extend the noncollapsedness to everywhere at all scales. We can write formula (\ref{density}) as
\begin{eqnarray*}
\lim_{s\rightarrow\infty}\mathcal{V}_{(p_0,0)}(s)=V_\infty\in(0,1),
\end{eqnarray*}
where we use the subindex to signify the base point.  It then follows from Lemma 3.1 in \cite{YT} that
\begin{eqnarray*}
\lim_{s\rightarrow\infty}\mathcal{V}_{(p,\tau)}(s)\geq V_\infty>0,
\end{eqnarray*}
for all $(p,\tau)\in M\times(0,\infty)$. Note that the proof of  Lemma 3.1 in \cite{YT} requires only time-wise Ricci curvature lower bound. A standard argument as in \cite{P1} implies that $(M,g(\tau))_{\tau\in(0,\infty)}$ is $\kappa$-noncollaposed on all scales, where $\kappa$ depends on the dimension and $V_\infty\in(0,1]$.

\end{document}